\documentclass[12pt]{article}
\usepackage{amsmath,amsthm,amssymb}
\usepackage{hyperref}

\theoremstyle{plain}
\newtheorem{theorem}{Theorem}[section]
\newtheorem{corollary}[theorem]{Corollary}
\newtheorem{lemma}[theorem]{Lemma}
\newtheorem{proposition}[theorem]{Proposition}

\theoremstyle{definition}
\newtheorem{definition}[theorem]{Definition}
\newtheorem{example}[theorem]{Example}
\newtheorem*{runningexample}{Running example}

\theoremstyle{remark}
\newtheorem{remark}[theorem]{Remark}

\newcommand{\F}{\mathbb{F}}
\newcommand{\Q}{\mathbb{Q}}
\newcommand{\Z}{\mathbb{Z}}
\newcommand{\seqnum}[1]{\href{https://oeis.org/#1}{\rm \underline{#1}}}

\title{Integers in Sequences Generated by Recurrences with Non-Integer Coefficients}

\author{Max Lippmann\\
Roslyn High School\\
Roslyn Heights, NY 11577\\
USA\\
\texttt{max.lippmann@yahoo.com}}

\date{}

\begin{document}

\maketitle

\begin{abstract}
In this paper, we study the properties of linear second-order recurrences whose coefficients are non-integer rational numbers. We examine the set of indices at which the terms in the sequence are integers. Specifically, we prove two results. First, after fixing coefficients to satisfy certain coprimality assumptions, we show that there exist coprime integer initial conditions so that the set of integer terms of that sequence is as large as desired. Second, using the same coprimality assumptions, for each fixed pair of initial conditions, we give an explicit cutoff index after which no further terms are integral.
\end{abstract}

\medskip

\noindent \textit{2020 Mathematics Subject Classification.}
Primary 11B37; Secondary 11B39.

\medskip

\noindent \textit{Keywords.}
linear recurrence, rational coefficient, integer term, Lucas sequence, $p$-adic valuation.

\section{Introduction}\label{sec:introduction}

 Fix integers $a,b,c,d$ with $b,d\ge 2$ and $\gcd(a,b)=\gcd(c,d)=1$, and consider the recurrence
\begin{equation}\label{eq:Sn-rec}
S_{n+2}=\frac{a}{b}S_{n+1}+\frac{c}{d}S_n,\qquad S_0,S_1\in\Z.
\end{equation}

\begin{example}\label{ex:scarcity}
For this example, consider the recurrence: $S_{n+2} = \frac{1}{2}S_{n+1} + \frac{1}{3}S_n$ with $S_0 = 2$ and $S_1 = 7$. This selection of parameters generates the terms:
\[
2,\ 7,\ \frac{25}{6},\ \frac{53}{12},\ \frac{259}{72},\ \frac{157}{48},\ \frac{2449}{864},\ \frac{4333}{1728},\ldots.
\]
After the initial terms, we notice that the denominators of the fractions grow rapidly. Therefore, large cancellations must occur in this sequence to generate integers, which makes them a rare occurrence.
\end{example}

Since integrality should be so rare in these sequences, we are led to ask the following questions. For fixed coefficients $a/b$ and $c/d$, can one choose coprime initial terms $(S_0,S_1)$ so that $S_n\in\Z$ at arbitrarily many distinct indices? Secondly, once the coefficients and coprime initial terms are fixed, can one find an index $N$ such that $S_n\notin\Z$ for all $n\ge N$? The first thing one notices is that one can take advantage of the connection between the non-integer-coefficient recurrence~\eqref{eq:Sn-rec} and the study of an integer sequence. More specifically, we make use of an auxiliary sequence as follows.

Let $K:=bd$. Now we can define the auxiliary sequence like so:
\[
T_n:=K^nS_n\qquad(n\ge 0).
\]
Then $T_n$ satisfies an integer-coefficient recurrence, and $S_n\in\Z$ holds exactly when $K^n$ divides $T_n$.

\begin{proposition}[Auxiliary integer recurrence]\label{prop:aux-recurrence}
If $K$ and $T_n$ are as defined above, then $T_n$ satisfies the second-order linear recurrence with integer coefficients:
\begin{equation}\label{eq:Tn-rec}
T_{n+2}=ad\,T_{n+1}+b^2cd\,T_n, \text{ where } T_0=S_0,\ T_1=KS_1.
\end{equation}
Furthermore,
\begin{equation}\label{eq:divisibility}
S_n\in\Z \text{ if and only if } K^n\mid T_n.
\end{equation}
\end{proposition}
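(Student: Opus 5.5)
The plan is a direct substitution of $S_n = T_n/K^n$ into \eqref{eq:Sn-rec}, followed by clearing denominators. First I record the initial values: $T_0 = K^0 S_0 = S_0$ and $T_1 = K S_1$, both integers because $S_0,S_1\in\Z$.

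For the recurrence, I multiply \eqref{eq:Sn-rec} through by $K^{n+2}$:
\[
T_{n+2}=K^{n+2}S_{n+2}=\frac{a}{b}\,K\cdot K^{n+1}S_{n+1}+\frac{c}{d}\,K^2\cdot K^nS_n=\frac{aK}{b}\,T_{n+1}+\frac{cK^2}{d}\,T_n .
\]
With $K=bd$ the coefficients simplify to $aK/b=ad$ and $cK^2/d=b^2cd$. This gives \eqref{eq:Tn-rec}. Induction on $n$ starting from the integral values $T_0,T_1$ then shows that every $T_n$ is an integer.

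For the equivalence \eqref{eq:divisibility}, we have $S_n=T_n/K^n$ with $T_n\in\Z$ and $K^n\ge1$. So $S_n$ is an integer exactly when the integer $T_n$ is divisible by $K^n$.

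There is no real obstacle here. The one point that needs care is confirming that the coefficients $ad$ and $b^2cd$ are integers. That is exactly why $K=bd$ is chosen: one factor of $K$ clears the denominator $b$, and $K^2$ clears $d$ with room to spare. Integrality of the coefficients is what lets the induction show $T_n\in\Z$, and that fact is what makes the divisibility statement meaningful.
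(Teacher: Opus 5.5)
Your proof is correct and matches the paper's argument: multiply the recurrence by $K^{n+2}$, use $K=bd$ to simplify the coefficients to $ad$ and $b^2cd$, and read off the divisibility criterion from $S_n=T_n/K^n$. You also give an explicit induction showing $T_n\in\Z$, which the paper leaves implicit but which is needed for the statement $K^n\mid T_n$ to make sense.
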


\begin{proof}
Multiply \eqref{eq:Sn-rec} by $K^{n+2}$ and use $K=bd$ together with the definition $T_m=K^mS_m$ to obtain \eqref{eq:Tn-rec}. Since $S_n=T_n/K^n$, \eqref{eq:divisibility} follows.
\end{proof}
\begin{example}\label{ex:oeis-a133594}
To demonstrate this paper's relevance to the study of integer sequences, we look at an example from the OEIS. Specifically, taking
\[
a=1,\qquad b=2,\qquad c=1,\qquad d=3,
\]
we obtain from Proposition~\ref{prop:aux-recurrence} that \(K=bd=6\), and the auxiliary sequence satisfies
\[
T_{n+2}=3T_{n+1}+12T_n.
\]
This integer recurrence appears in the OEIS: the sequence \seqnum{A133594} satisfies
\[
A_{n+2}=3A_{n+1}+12A_n,\qquad A_1=3,\quad A_2=18.
\]
Thus a shifted and scaled copy of \seqnum{A133594} gives the auxiliary sequence for the fractional recurrence
\[
S_{n+2}=\frac12S_{n+1}+\frac13S_n.
\]
Specifically, by setting $T_n=\frac{A_{n+1}}{3}$ we have that
\[
T_0=1,\qquad T_1=6,\qquad T_2=30,\ldots,
\]
and therefore $S_n=\frac{T_n}{6^n}$ starts with the terms
\[
S_0=1,\qquad S_1=1,\qquad S_2=\frac56,\ldots.
\]
This gives a concrete example from the OEIS of an auxiliary integer recurrence associated to a fractional-coefficient recurrence.
\end{example}
\begin{remark}\label{rem:normalizations}
Here we introduce a few important assumptions we will use throughout the paper.
\smallskip\noindent

\textit{(i) Coprime initial terms.}
Coprime initial terms is a very important assumption since it removes many trivial cases where early integrality can be easily generated through large common factors of the initial two terms in the sequence. To illustrate write \(S_0=gx\) and \(S_1=gy\) with \(g\in\Z_{>0}\) and with $g=\gcd(gx,gy)$. Because the recurrence we are working with is linear, each term must be $g$ times the corresponding term of the sequence with initial terms $x$ and $y$. Therefore, because $S_n$ always contains an extra multiple of $g$, if $g$ is divisible by high powers of \(bd\), we can artificially generate a long initial stretch of integral terms. This method of integer generation is not interesting to study, however, since it makes early integrality behavior mostly depend on the choice of $g$ rather than the interesting cancellation patterns of the recurrence itself.
For a more concrete example, take
\[
S_{n+2}=\frac12S_{n+1}+\frac13S_n.
\]
If we choose non-coprime initial terms
\[
S_0=216,\qquad S_1=216,
\]
we find that the first five terms of the sequence are all integers:
\[
216,\ 216,\ 180,\ 162,\ 141,\ldots,
\]
However, this just comes from the fact that the initial terms have the large common factor
\[
216=6^3.
\]
To demonstrate why that is the case, we divide the initial terms by this common factor and instead start with the coprime initial terms
\[
\widetilde S_0=1,\qquad \widetilde S_1=1,
\]
then the surprising integrality in the sequence disappears:
\[
1,\ 1,\ \frac56,\ \frac34,\ \frac{47}{72},\ldots.
\]
Therefore, without the condition $\gcd(S_0,S_1)=1$, we lose sight of the real question concerning the cancellation properties of these sequences.

\smallskip\noindent

\textit{(ii) Lowest terms and separated denominators.}
These assumptions are a bit less intuitive than the assumption involving coprime initial conditions. To illustrate their necessity, we write the rational coefficients in lowest terms, $\frac{a}{b},\ \frac{c}{d}\in\Q,\space b,d\ge2,\space \gcd(a,b)=\gcd(c,d)=1,$ so every prime $p\mid d$ also does not divide $c$.
Additionally we assume $\gcd(b,d)=\gcd(a,d)=1,$ so the prime factors of $b$ and $d$ are disjoint and, for each $p\mid d$, we also have that $p\nmid ab$. This assumption also rules out some degenerate examples in which integrality comes from a characteristic polynomial factoring rather than from the intriguing divisibility behavior studied in this paper. This recurrence serves as a warning as to what can happen when an unwanted factorization happens in a characteristic polynomial.
\[
S_{n+2}=\frac52 S_{n+1}+\frac32 S_n,\qquad S_0=1,\quad S_1=3.
\]
The sequence begins
\[
1,\ 3,\ 9,\ 27,\ 81,\ldots,
\]
We quickly see that this sequence is just the powers of $3$, listed in the OEIS as \seqnum{A000244}. Thus every term is integral. However, this happens for a rather anticlimactic reason: the characteristic polynomial factors as
\[
X^2-\frac52 X-\frac32=(X-3)(X+\frac12).
\]
The initial conditions eliminate the $(-\frac12)^n$ component, leaving only $S_n=3^n$. Thus the integrality here is not caused by cancellation in the recurrence; it is just caused by the sequence collapsing to the integer first-order recurrence $S_{n+1}=3S_n$. This is the kind of example excluded by the assumption $\gcd(b,d)=1$. These assumptions are used repeatedly in certain arguments in Sections~\ref{sec:prescribed} and~\ref{sec:cutoff}.
\end{remark}

The next example shows that, even though in these sequences, integers are hard to come by, as shown in Example~\ref{ex:scarcity}, certain coprime initial terms can still result in integer terms at later indices.

\begin{example}\label{ex:later-integers}
Here we examine the recurrence:
\[
S_{n+2} = \frac{1}{3}S_{n+1} + \frac{1}{2}S_n,
\qquad
S_0 = 1,\quad S_1 = 4683.
\]
The terms of the recurrence go as follows
\[
\begin{gathered}
S_0=1,\quad S_1=4683,\quad S_2=\tfrac{3123}{2},\quad S_3=2862,\\
S_4=\tfrac{6939}{4},\quad S_5=\tfrac{8037}{4},\quad S_6=\tfrac{12297}{8},\quad S_7=1517,\ldots
\end{gathered}
\]
Now we see that $S_3,S_7\in\Z$, while the other listed terms with index greater than one are non-integral.
\end{example}

Corvaja and Zannier \cite{CorvajaZannier2002} proved that, under our coprimality assumptions, a recurrence of the form \eqref{eq:Sn-rec} cannot have $S_n\in\Z$ for infinitely many indices $n$. In a similar vein, Delanoy's Math StackExchange discussion observes that rational recurrences with non-integral coefficients face strong denominator obstructions, making integer-valued examples difficult to obtain except in special circumstances \cite{DelanoyMSE}. Although these results and observations do answer some natural questions about these recurrences, they inspire us to ask the following related question: can one choose coprime initial terms so that integrality occurs for every term in an arbitrarily long but finite list of indices?

The rest of this paper answers both of these questions, each with its own theorem. The theorem answering the first question shows that, for fixed coefficients, one can force arbitrarily many indices in a given recurrence by carefully constructing special coprime initial terms. The theorem answering the second question finds, for a given recurrence with fixed coefficients and fixed coprime initial terms, an index number, such that past that index number, integrality becomes impossible.
\begin{theorem}\label{thm:main-prescribe}
Let $a,b,c,d\in\mathbb Z$ with $b,d\ge 2$. Assume
\[
\gcd(a,b)=\gcd(c,d)=1,\qquad \gcd(a,d)=\gcd(b,d)=1.
\]
For every $k\ge 1$, there exist coprime integers $S_0,S_1$ and a distinct set of indices $2\le n_1<\cdots<n_k$ so that in the recurrence
\[
S_{n+2}=\frac ab S_{n+1}+\frac cd S_n
\]
for each $j$ we have $S_{n_j}\in \mathbb{Z}.$
\end{theorem}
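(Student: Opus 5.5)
The plan is to prove something stronger: one can take $S_0=1$ and choose $S_1$ so that $S_n\in\Z$ for \emph{every} $2\le n\le k+1$. Then $n_j=j+1$ works, and $\gcd(S_0,S_1)=1$ holds automatically.

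\emph{Reduction to finitely many primes.} By Proposition~\ref{prop:aux-recurrence}, $K^nS_n=T_n\in\Z$, so the only primes that can occur in the denominator of $S_n$ divide $K=bd$. It therefore suffices to make $S_n$ a $p$-adic integer for each prime $p\mid K$ and each $n\le N:=k+1$. By linearity, write $S_n=U_n S_0+V_n S_1$ with $U_n,V_n\in\Q$. Both satisfy the recurrence, with $(U_0,U_1)=(1,0)$ and $(V_0,V_1)=(0,1)$, and $K^nU_n,K^nV_n\in\Z$. Hence $v_p(V_n)\ge -n\,v_p(K)$.

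\emph{A $p$-adically integral solution.} For each prime $p\mid K$, I will find a root $\beta_p\in\Z_p$ of $X^2-\frac ab X-\frac cd$ by a Newton polygon argument, using the separation hypotheses.
\begin{itemize}
\item If $p\mid b$, then $p\nmid d$ because $\gcd(b,d)=1$, and $p\nmid a$. The Newton polygon has vertices $(0,v_p(c))$, $(1,-v_p(b))$, $(2,0)$. So there are two distinct slopes, giving roots in $\Q_p$ of valuations $-v_p(b)$ and $v_p(b)+v_p(c)\ge 1$. Take $\beta_p$ to be the second one.
\item If $p\mid d$, then $p\nmid abc$ by $\gcd(c,d)=\gcd(a,d)=\gcd(b,d)=1$. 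The vertices are $(0,-v_p(d))$, $(1,0)$, $(2,0)$. The roots have valuations $-v_p(d)$ and $0$, and $\beta_p$ is the unit root.
\end{itemize}
In either case the two slopes differ, so each root lies in $\Q_p$ (each edge of the polygon has length one), and $\beta_p\in\Z_p$. The sequence $\beta_p^n$ satisfies the recurrence with initial values $1,\beta_p$. Therefore
\[
S_n-\beta_p^n=V_n\,(S_1-\beta_p)\qquad\text{when } S_0=1.
\]

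\emph{Choosing $S_1$.} Put $M_p:=N\,v_p(K)$. By the Chinese remainder theorem, choose an integer $S_1$ with $S_1\equiv \beta_p \pmod{p^{M_p}\Z_p}$ for every prime $p\mid K$. This is possible because $\Z\to\Z_p/p^{M}\Z_p$ is onto. Then for $n\le N$,
\[
v_p(S_n-\beta_p^n)\ge -n\,v_p(K)+M_p\ge 0,
\]
so $S_n\in\Z_p$ for every $p\mid K$. Combined with the first step, this gives $S_n\in\Z$ for all $2\le n\le N$.

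\emph{Main obstacle.} The key step is the Newton polygon computation, specifically checking that the two slopes are distinct so that $\beta_p$ really is an integral root lying in $\Q_p$. This is exactly where $\gcd(a,b)=\gcd(c,d)=\gcd(a,d)=\gcd(b,d)=1$ enter. The case $p\mid d$ needs $p\nmid a$, so that the middle vertex sits at height $0$. The case $p\mid b$ needs $p\nmid d$ and $p\nmid a$, so that the middle vertex is strictly below the segment joining the endpoints. I would verify these two polygon shapes carefully; the remaining steps are routine.
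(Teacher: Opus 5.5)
Your argument breaks down at the special primes $p\mid d$. The Newton polygon you describe there is wrong, and the integral root $\beta_p$ does not exist.

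With $\delta=\nu_p(d)$, the points are $(0,-\delta)$, $(1,0)$, $(2,0)$. The chord from $(0,-\delta)$ to $(2,0)$ has height $-\delta/2<0$ at $x=1$, so $(1,0)$ lies \emph{above} it. The polygon is therefore a single segment of slope $\delta/2$. Having the middle point at height $0$ is exactly what prevents it from being a vertex.

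Both roots therefore have valuation $-\delta/2$, which is what the paper proves in Lemmas~\ref{lem:odd-p-roots} and~\ref{lem:dyadic-roots}. You can also see this directly. The sum $r_1+r_2=a/b$ is a $p$-adic unit, while $\nu_p(r_1r_2)=\nu_p(c/d)=-\delta$. If one root were a unit, the other would have valuation $-\delta$, and then so would the sum. In the running example the roots are $(1\pm\sqrt{19})/6$, which do not even lie in $\Q_2$. So the identity $S_n-\beta_p^n=V_n(S_1-\beta_p)$ has no integral $\beta_p$ to anchor it at special primes.

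In fact your strengthened claim is false, so no repair along these lines can work. Take $S_0=1$ and $p\mid d$. Then $\nu_p\bigl(\tfrac ab S_1\bigr)\ge 0$ while $\nu_p\bigl(\tfrac cd\bigr)=-\delta$, so $\nu_p(S_2)=-\delta<0$ for every choice of $S_1$. Compare $S_2=\tfrac{3123}{2}$ in the running example. The integral indices must be chosen sparsely.

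Your treatment of the regular primes $p\mid b$ is correct. The root of valuation $\nu_p(b)+\nu_p(c)\ge 1$ does lie in $p\Z_p$, and approximating it is a tidy substitute for the paper's Cassini-based compatibility of the $\rho_n$.

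The special primes need a different mechanism. Because the two Binet terms have equal valuation there, $p$-integrality at index $n$ forces near-cancellation between them, and this can only be arranged at selected indices. The paper proceeds as follows:
\begin{itemize}
\item It writes $M^2=p^{\delta}A_p$ with $A_p$ an integer matrix, so that $\nu_p(U_{2m})=\nu_p(U_{2m+1})=\delta m$ whenever $p\nmid m$.
\item Integrality at index $2m+1$ then becomes the single congruence $S_0/S_1\equiv r_p(m)\pmod{p^{\delta m}}$.
\item Choosing $m_{j+1}\equiv m_j\pmod{bE\,d^{m_j}}$, together with $A_p^{\ell_p}\equiv I\pmod p$, makes these congruences compatible across $j$.
\item The Chinese remainder theorem then glues the resulting class to the regular-prime class, whether obtained your way or the paper's.
\end{itemize}
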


Theorem~\ref{thm:main-prescribe} is proven first, in Section~\ref{sec:prescribed}. For the second theorem, which we prove in Section~\ref{sec:cutoff}, we use $p$-adic valuations, introduced here: For a prime $p$, we write $\nu_p(z)$ for the usual $p$-adic valuation \cite{Koblitz1984}. Note the following property of the $p$-adic valuation of the sum of two numbers:
\[
\nu_p(X+Y)\ge \min(\nu_p(X),\nu_p(Y)),
\]
with equality whenever $\nu_p(X)\ne \nu_p(Y)$. We use this property often in various parts of Section~\ref{sec:cutoff}.

\begin{theorem}\label{thm:main-cutoff}
Let $a,b,c,d,S_0,S_1\in\mathbb Z$ with $b,d\ge 2$ and
\[
\gcd(a,b)=\gcd(c,d)=\gcd(S_0,S_1)=1,\qquad \gcd(a,d)=\gcd(b,d)=1.
\]
Let $S_n$ be defined by the recurrence relation:
\[
S_{n+2} = \frac{a}{b} S_{n+1} + \frac{c}{d} S_n.
\]
Then there exists an index $k$ (depending on $a,b,c,d,S_0,S_1$) such that $S_n\notin\mathbb Z$ for all $n\ge k$.
\end{theorem}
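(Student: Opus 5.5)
The plan is to prove the statement $q$-adically for a prime $q\mid b$, and to fall back on a prime $p\mid d$ only in one degenerate case. I work directly with $S_{n+2}=uS_{n+1}+wS_n$, where $u=a/b$ and $w=c/d$; Proposition~\ref{prop:aux-recurrence} is not needed. Note first that $c\neq0$, since $\gcd(c,d)=1$ and $d\ge2$. Fix a prime $q\mid b$ and put $f=\nu_q(b)\ge1$. Since $\gcd(a,b)=1$ we get $\nu_q(u)=-f$. Since $\gcd(b,d)=1$ we have $q\nmid d$, hence $\nu_q(w)=\nu_q(c)=:g\ge0$. The characteristic polynomial is $X^2-uX-w$. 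Its Newton polygon over $\mathbb{Q}_q$ has two segments of distinct integer slopes. So it splits over $\mathbb{Q}_q$ into distinct roots $\alpha,\beta\in\mathbb{Q}_q$ with
\[
\nu_q(\alpha)=-f<0,\qquad \nu_q(\beta)=f+g\ge1 .
\]
Because $\alpha\neq\beta$, there are $A,B\in\mathbb{Q}_q$ with $S_n=A\alpha^n+B\beta^n$ for all $n$; explicitly $A=(S_1-\beta S_0)/(\alpha-\beta)$.

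\emph{Case $A\neq0$.} Here $\nu_q(A\alpha^n)=\nu_q(A)-fn$ decreases linearly. If $B\neq0$, then $\nu_q(B\beta^n)=\nu_q(B)+(f+g)n$ increases linearly. For $n>(\nu_q(B)-\nu_q(A))/(2f+g)$ the two valuations differ, so by the ultrametric property quoted before the theorem $\nu_q(S_n)=\nu_q(A)-fn$. For $n>\nu_q(A)/f$ as well, this is negative and $S_n\notin\Z$. Taking $k$ larger than both bounds gives the explicit cutoff, which depends only on $a,b,c,d,S_0,S_1$ through $A,B$.

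A more elementary version avoids $A,B$. Call the index $n$ \emph{triggered} if $\nu_q(S_n)-f<\nu_q(S_{n-1})+g$. Then $\nu_q(S_{n+1})=\nu_q(S_n)-f$, and by induction $\nu_q$ drops by exactly $f$ at every later step. The case $A\ne0$ is exactly the case where some index is eventually triggered. I would present this version alongside the first, since it gives a cleaner formula for $k$ in the common case $q\nmid S_1$, namely $k=2$.

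\emph{Case $A=0$.} Then $S_n=B\beta^n$, so $S_0=B$ and $S_1=\beta S_0$. Also $S_0\neq0$, for otherwise the whole sequence vanishes, contradicting $\gcd(S_0,S_1)=1$. Hence $\beta=S_1/S_0\in\Q$ is a rational root of $bd\,X^2-ad\,X-bc$. This is the degenerate ``factoring'' situation of Remark~\ref{rem:normalizations}, and it is the step I expect to need the most care. Here I switch to a prime $p\mid d$ with $e=\nu_p(d)\ge1$. The hypotheses give $p\nmid abc$, so $\nu_p(u)=0$ and $\nu_p(w)=-e$. The Newton polygon at $p$ then has a single segment of slope $-e/2$, so both roots, in particular the rational $\beta$, satisfy $\nu_p(\beta)=-e/2<0$. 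Equivalently, write $\beta=s/t$ in lowest terms and reduce $bd s^2-ad st-bc t^2=0$ mod $p$: this forces $p\mid t$. Therefore $\nu_p(S_n)=\nu_p(S_0)+n\,\nu_p(\beta)<0$ as soon as $n>2\nu_p(S_0)/e$, which gives an explicit $k$ in this case too.

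Combining the two cases proves the theorem. The main obstacle is bookkeeping rather than an idea. One must justify the $q$-adic splitting, which follows from Hensel's lemma or the Newton polygon since the slopes are distinct. One must also handle the fact that $A$ and $B$ live in $\mathbb{Q}_q$ rather than $\Q$, so the cutoff is explicit in terms of $\nu_q(A)$ and $\nu_q(B)$. If a cutoff in purely elementary terms is wanted, I would bound $\nu_q(A)$ and $\nu_q(B)$ using $A-B$-free identities such as $A(\alpha-\beta)=S_1-\beta S_0$ and $\nu_q(\alpha-\beta)=-f$. Finally, one must make sure the degenerate case $A=0$ is genuinely covered by the $p\mid d$ argument; this is where $\gcd(b,d)=\gcd(a,d)=1$ is used.
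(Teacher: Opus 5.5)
Your proof is correct, and it takes a genuinely different and considerably more elementary route than the paper. The paper works only at primes $p\mid d$, where both characteristic roots have the same valuation $-\delta/2$. There $\Lambda=r_1/r_2$ is a $p$-adic unit, and integrality forces $\nu_p(\Lambda^n+\Gamma^{-1})$ to grow linearly. Ruling this out needs three inputs: Chim's $p$-adic linear-forms-in-logarithms bound (Lemma~\ref{lem:chim-log-cor}), a separate $p$-adic logarithm argument in the multiplicatively dependent case (Lemma~\ref{lem:unit-minus-rootofunity}), and the fact that $\Lambda$ is not a root of unity. As a result, the paper's cutoff is expressed through Baker-type constants.

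You instead use a prime $q\mid b$, where the Newton polygon separates the roots: $\nu_q(\alpha)=-f<0<f+g=\nu_q(\beta)$. So for large $n$ there is no room for cancellation, and $\nu_q(S_n)=\nu_q(A)-fn$. The primes $p\mid d$ enter only when $A=0$. Then the sequence is geometric with rational ratio, and the easy computation $\nu_p(\beta)=-e/2$ finishes; this is the analogue of the paper's Lemma~\ref{lem:one-coefficient-zero}.

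What your route buys is a fully effective and often sharp cutoff with no transcendence input. For instance, $k=2$ whenever $q\nmid S_1$. In Example~\ref{ex:later-integers}, with $q=3$, index $4$ is the first triggered one under your ``triggered index'' criterion. This shows $S_n\notin\Z$ for all $n\ge 8$, so $S_7$ is the last integer term. The paper's route is the kind of argument needed when no prime separates the root valuations (e.g.\ $b=1$ with $\Lambda$ not a root of unity). The stated hypotheses do not require that generality.

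There is one harmless slip. The valuations $\nu_q(A)-fn$ and $\nu_q(B)+(f+g)n$ differ once $n>(\nu_q(A)-\nu_q(B))/(2f+g)$, not $(\nu_q(B)-\nu_q(A))/(2f+g)$. In fact, since $S_0,S_1\in\Z$, you have $\nu_q(B)=\nu_q(\alpha S_0-S_1)+f\ge 0$. So $k=\lfloor \nu_q(A)/f\rfloor+1$ suffices on its own.
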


\section{Integers at desired indices}\label{sec:prescribed}

In this section we prove Theorem~\ref{thm:main-prescribe}. We modify the integrality condition on $S_n$ into an equivalent question relating $p$-adic valuation inequalities for a new integer valued linear recurrence called $T_n$. We then separate the primes we are tracking the $p$-adic valuation of into two classes: regular primes and special primes. After finding a method to construct compatible coprime initial conditions such that the $p$-adic inequalities are satisfied separately for each class, we glue the initial conditions together using the Chinese remainder theorem.

\subsection{Converting to the Auxiliary Sequence}

Throughout this section we use the assumptions on $a,b,c,d\in\Z$ with $b,d\ge 2$ and $\gcd(a,b)=\gcd(c,d)=\gcd(a,d)=\gcd(b,d)=1$. Using these assumptions, we look at the recurrence:
\begin{equation}\label{eq:Sn-recurrence}
S_{n+2}=\frac{a}{b}S_{n+1}+\frac{c}{d}S_n,\qquad
S_0,S_1\in\Z,\ \gcd(S_0,S_1)=1.
\end{equation}
Now let $T_n$ be an integer sequence, defined by $K:=bd$ and $T_n:=K^n S_n$. Then $S_n$ being an integer is equivalent to $\nu_p(T_n)\ge n\nu_p(K)$ for all primes $p\mid K$ by Proposition~\ref{prop:aux-recurrence}, we call this newly defined recurrence the auxiliary sequence of $S_n$. Furthermore from this proposition, if $P:=ad$ and $Q:=-b^{2}cd$, then the auxiliary sequence satisfies
\begin{equation}\label{eq:Trec}
T_{n+2}=PT_{n+1}-QT_n,\qquad T_0=S_0,\quad T_1=K S_1.
\end{equation}
For the remainder of the proof, we will work with the auxiliary sequence $T_n$.

\begin{runningexample}
We demonstrate how we prove the theorem by examining the recurrence
\[
  S_{n+2}=\frac13 S_{n+1}+\frac12 S_n,\qquad S_0=x,\ S_1=y,
\]
Our goal is to generate coprime initial conditions $(x,y)$ such that the sequence has integer terms at the indices $3$ and $7$. Our first step is to convert the original sequence with fractional coefficients into its auxiliary integer recurrence. In this example, $a=c=1$, $b=3$, and $d=2$, so
\[
K=bd=6,\qquad P:=ad=2,\qquad Q:=-b^2cd=-18.
\]
Since $K=6$ and $T_n=6^nS_n$, Proposition~\ref{prop:aux-recurrence} alters the recurrence into:
\[
T_0=x,\qquad T_1=6y,\qquad T_{n+2}=ad\,T_{n+1}+b^2cd\,T_n=2T_{n+1}+18T_n.
\]
As proven, $S_n\in\mathbb Z$ exactly when $6^n\mid T_n$.
\end{runningexample}

Now we introduce a new linear second-order recurrence to use for the proof:  let $U_n$ be the Lucas sequence of the first kind for $(P,Q)$:
\begin{equation} \label{eq:lucas}
U_0=0,\quad U_1=1,\quad U_{n+2}=PU_{n+1}-QU_n.
\end{equation}

\begin{lemma}\label{lem:companion}
With $K=bd$, $T_n:=K^n S_n$, and $n>0$, we have
\[
T_n = K S_1U_n - QS_0U_{n-1},
\]
where $U_n$ is the Lucas sequence defined in \eqref{eq:lucas}.
\end{lemma}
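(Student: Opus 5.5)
The plan is to prove the identity by strong induction on $n\ge 1$, using that both sides satisfy the same second-order linear recurrence $X_{n+2}=PX_{n+1}-QX_n$. The left side $T_n$ satisfies this recurrence by \eqref{eq:Trec}. For the right side, note that $U_n$ satisfies it by \eqref{eq:lucas}, and so does the shifted sequence $U_{n-1}$ for $n\ge 1$. Since the recurrence is linear, the combination $KS_1U_n-QS_0U_{n-1}$ also satisfies it for all $n\ge1$. It therefore suffices to check agreement at two consecutive indices; we use $n=1$ and $n=2$.

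For $n=1$ we have $U_1=1$ and $U_0=0$, so the right side is $KS_1$, which equals $T_1$ by \eqref{eq:Trec}.

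For $n=2$ we have $U_2=PU_1-QU_0=P$ and $U_1=1$, so the right side is $PKS_1-QS_0$. By \eqref{eq:Trec} this is exactly $T_2=PT_1-QT_0=PKS_1-QS_0$.

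For the inductive step, assume the formula holds at $n$ and $n+1$ with $n\ge1$. Then
\[
T_{n+2}=P(KS_1U_{n+1}-QS_0U_n)-Q(KS_1U_n-QS_0U_{n-1}).
\]
Regrouping, this equals $KS_1(PU_{n+1}-QU_n)-QS_0(PU_n-QU_{n-1})$, which is $KS_1U_{n+2}-QS_0U_{n+1}$ by \eqref{eq:lucas}.

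Alternatively, one could define $U_{-1}$ by running the recurrence backwards, which gives $U_{-1}=-1/Q$. The formula then holds at $n=0$ as well, since the right side becomes $-QS_0\cdot(-1/Q)=S_0=T_0$. However, $Q$ is nonzero but $U_{-1}$ is not an integer, so starting the induction at $n=1,2$ is cleaner and matches the hypothesis $n>0$.

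There is no real obstacle here. The only point needing care is the sign convention: the recurrence is written as $PT_{n+1}-QT_n$ with $Q=-b^2cd$, so that $-Q=b^2cd$ matches Proposition~\ref{prop:aux-recurrence}. With that convention the base case $n=2$ and the inductive step go through consistently.
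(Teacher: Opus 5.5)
Your proof is correct and follows the paper's argument essentially verbatim. The paper likewise sets $W_n:=KS_1U_n-QS_0U_{n-1}$, checks $W_1=T_1$ and $W_2=T_2$, and verifies $W_{n+2}=PW_{n+1}-QW_n$ by the same regrouping (your side remark about extending to $n=0$ via $U_{-1}=-1/Q$ is also correct but unnecessary).
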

\begin{proof}
Set
\[
W_n:=KS_1U_n-QS_0U_{n-1}\qquad(n\ge 1).
\]
Then
\[
W_1=KS_1=T_1.
\]
Since $U_2=P$, we also have
\[
W_2=KS_1U_2-QS_0U_1=PKS_1-QS_0=T_2.
\]
This shows that $W_n$ and $T_n$ have the same initial values for $n=1$ and $n=2$. Also, because $U_{n+2}=PU_{n+1}-QU_n$,
\begin{align*}
W_{n+2}
&=KS_1U_{n+2}-QS_0U_{n+1}\\
&=KS_1(PU_{n+1}-QU_n)-QS_0U_{n+1}\\
&=P(KS_1U_{n+1}-QS_0U_n)-Q(KS_1U_n-QS_0U_{n-1})\\
&=PW_{n+1}-QW_n.
\end{align*}
Since the two sequences $W_n$ and $T_n$ satisfy the same second-order recurrence and agree at $n=1$ and $n=2$, we have $W_n=T_n$ for all $n>0$.
\end{proof}
\begin{runningexample}[Companion identity in this case]
Here, we see the associated Lucas sequence $U_n$ for the running example:
\[
U_0=0,\quad U_1=1,\quad U_{n+2}=2U_{n+1}+18U_n.
\]
We use Lemma~\ref{lem:companion} to find the expression for $T_n$
\[
T_n=KS_1U_n-QS_0U_{n-1}=6y\,U_n+18x\,U_{n-1}.
\]
Proposition~\ref{prop:aux-recurrence}, shows that for each $n\ge 0$: $S_n\in\Z \text{ if and only if } K^n\mid T_n$. An equivalent and more useful form of this is:
\[
S_n\in\Z \text{ if and only if } \nu_p(T_n)\ge n\nu_p(K)\qquad\text{for every prime }p\mid K.
\]
Thus $S_n$ being an integer is equivalent to a set of $p$-adic inequalities for primes dividing $K$. Since $K=2\cdot 3$, $S_n\in\mathbb Z$ is
equivalent to the pair of inequalities
\[
\nu_2(T_n)\ge n\quad\text{and}\quad \nu_3(T_n)\ge n.
\]
Therefore to select proper initial conditions that force $S_3,S_7\in\mathbb Z$, it is enough to demonstrate that using those initial conditions will result in the inequalities being satisfied.
\[
\nu_2(T_3)\ge 3,\ \nu_3(T_3)\ge 3,\qquad
\nu_2(T_7)\ge 7,\ \nu_3(T_7)\ge 7.
\]
\end{runningexample}
\begin{definition}
Let $p\mid K=bd$. We call $p$ \emph{special} if $p\mid d$, and \emph{regular} if $p\mid b$.
Equivalently, under the standing assumptions stated in Remark~\ref{rem:normalizations},  since $\gcd(b,d)=1$, a prime divisor $p$ of $K$
is special exactly when $p\mid P$ and $p\mid Q$, and regular exactly when $p\mid Q$ but $p\nmid P$.
\end{definition}

The above definition allows us to outline the remainder of this proof. First, we determine the conditions on the initial terms needed for divisibility by regular and special primes separately, and then combine them using a slightly modified version of the Chinese remainder theorem.

\begin{runningexample}[Prime split for this example]
Here the primes dividing $K=6$ are $2$ and $3$.
Since $2\mid d$ and $2$ divides both $P=2$ and $Q=-18$, it is special.
Since $3\mid b$ but $3\nmid P=2$, the prime $3$ is regular.
To make the proof simpler, we will deal with divisibility by $2$ separately from divisibility by $3$.
\end{runningexample}

Here we start by dealing with the regular primes $p\mid b$.
\begin{proposition}\label{prop:B-global-glue}
Let $(S_n)$ be a sequence satisfying the equation described in \eqref{eq:Sn-recurrence}, let $K=bd$, and finally let $\mathcal N=\{n_1<\cdots<n_k\}$ be a finite increasing set of positive integers. There must always exist coprime integers
$S_0,S_1$ such that $b^n\mid T_n
\text{ for every }n\in\mathcal N$.
\end{proposition}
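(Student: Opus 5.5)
The plan is to treat each prime $p\mid b$ separately and to choose the initial terms so that, $p$-adically, $T_n$ lies almost entirely along the ``small'' characteristic root of \eqref{eq:Trec}. The local conditions are then glued with the Chinese remainder theorem. I will fix $S_0=1$ throughout, so that $\gcd(S_0,S_1)=1$ holds automatically and only $S_1$ has to be chosen.

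\emph{Local step at a regular prime.} Fix $p\mid b$ and set $e=\nu_p(b)\ge 1$. Because $\gcd(a,b)=\gcd(b,d)=1$, we have $p\nmid P=ad$, while $\nu_p(Q)=\nu_p(b^2cd)\ge 2e$. Hence the characteristic polynomial $X^2-PX+Q$ reduces mod $p$ to $X(X-P)$, which has two distinct simple roots. By Hensel's lemma it factors over $\Z_p$ as $(X-\alpha)(X-\beta)$. Here $\alpha$ is a $p$-adic unit, $\beta=Q/\alpha$ has $\nu_p(\beta)=\nu_p(Q)\ge 2e$, and $\alpha-\beta$ is a unit. Consequently
\[
T_n=A\alpha^n+B\beta^n,\qquad A=\frac{T_1-\beta T_0}{\alpha-\beta},\quad B=\frac{\alpha T_0-T_1}{\alpha-\beta},
\]
with $A,B\in\Z_p$. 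Put $M=n_k e$. We would like to impose
\[
T_1\equiv \beta T_0 \pmod{p^{M}},\quad\text{that is,}\quad KS_1\equiv \beta \pmod{p^M}.
\]
Since $\nu_p(K)=e<2e\le\nu_p(\beta)$, the element $\beta/K$ lies in $\Z_p$. So the congruence is solvable: take $S_1\equiv r_p\pmod{p^{M}}$, where $r_p\in\Z$ approximates $\beta/K$ modulo $p^{M}$; this gives $KS_1-\beta\equiv 0\pmod{p^{M+e}}$, which is more than enough. With this choice $\nu_p(A)\ge M\ge ne$ for every $n\le n_k$. We also have $\nu_p(B\beta^n)\ge n\nu_p(Q)\ge 2ne\ge ne$. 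By the ultrametric inequality, $\nu_p(T_n)\ge ne=\nu_p(b^n)$ for all $n\in\mathcal N$ (indeed for all $1\le n\le n_k$).

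\emph{Gluing.} The distinct primes $p\mid b$ give pairwise coprime moduli $p^{n_k\nu_p(b)}$. By the Chinese remainder theorem there is an integer $S_1$ satisfying all the congruences $S_1\equiv r_p$ simultaneously. With $S_0=1$ this gives $\nu_p(T_n)\ge n\nu_p(b)$ for every $p\mid b$ and every $n\in\mathcal N$, that is, $b^n\mid T_n$. Coprimality is trivial since $S_0=1$. If the later gluing with the special primes needs more freedom, the same argument works with any $S_0$ prime to $p$ by taking $S_1\equiv \beta S_0/K$; one then still has room to adjust $S_0$ modulo primes of $d$.

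\emph{Expected obstacle.} The main point to get right is the Hensel factorization and the bookkeeping $\nu_p(\beta)\ge 2e>\nu_p(K)$. This inequality is exactly what makes $\beta/K$ integral, and it is where the hypotheses $p\nmid P$ (from $\gcd(a,b)=\gcd(b,d)=1$) and $b^2\mid Q$ are used. If one wants to avoid $\Z_p$, an alternative is to check directly from Lemma~\ref{lem:companion} that the condition $T_n\equiv0\pmod{p^{ne}}$ is equivalent to $KS_1\equiv QS_0U_{n-1}U_n^{-1}$. Here $U_m\equiv P^{m-1}\pmod p$ is a unit for $m\ge1$. One would then need to show that these targets for different $n$ agree to sufficient precision, which is the same fact expressed through the ratios $U_{n-1}/U_n\to 1/\alpha$.
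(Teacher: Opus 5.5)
Your proof is correct, and it takes a genuinely different route from the paper.

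The paper stays entirely in $\Z$. By Lemma~\ref{lem:companion} it writes $T_n=bd\,(yU_n+bc\,xU_{n-1})$. With $x=1$, the condition $b^n\mid T_n$ becomes the exact congruence $y\equiv\rho_n\pmod{b^{n-1}}$, where $\rho_n=-bc\,U_{n-1}U_n^{-1}$; this uses $U_n\equiv P^{n-1}\not\equiv0\pmod p$ for $p\mid b$. The real work is the compatibility $\rho_m\equiv\rho_n\pmod{b^{n-1}}$ for $m\ge n$. The paper gets it from Cassini's identity $U_{t+1}U_{t-1}-U_t^2=-Q^{t-1}$, which gives $\rho_{t+1}-\rho_t\equiv0\pmod{b^{2t-1}}$, and then takes $y\equiv\rho_{n_k}$.

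You instead split $X^2-PX+Q$ over $\Z_p$ by Hensel and choose $S_1$ so as to kill the component of $T_n$ along the unit root. Your target $\beta/K=-bc/\alpha$ is exactly the $p$-adic limit of the paper's $\rho_n$, since $U_{n-1}/U_n\to1/\alpha$. So you aim at the limit directly. Compatibility across indices then comes for free from the ultrametric estimate, rather than having to be proved via Cassini.

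What each approach buys:
\begin{itemize}
\item The paper's version is elementary. It uses a single global modulus $b^{n_k-1}$, with no passage to $\Z_p$ and no CRT over the primes of $b$, and its congruence at each index is necessary and sufficient.
\item Your congruence modulo $p^{n_k\nu_p(b)}$ is stronger than needed and only sufficient. In exchange, it is more transparent: it makes evident that one residue class gives $b^n\mid T_n$ for every $1\le n\le n_k$ simultaneously (the paper's argument yields this too, implicitly).
\item Your version also matches the Binet/valuation viewpoint the paper adopts in Section~\ref{sec:cutoff}.
\end{itemize}

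If you write this up alongside the paper, rename your roots $\alpha,\beta$ of the $T$-recurrence. Otherwise they clash with the Binet coefficients $\alpha,\beta$ of Section~\ref{sec:cutoff}.
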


\begin{proof}
For notational simplicity, we write $S_0=x$ and $S_1=y$. Now fix a prime $p\mid b$. Since $\gcd(a,b)=1$ and $\gcd(b,d)=1$, we have 
$p\nmid a$ and $p\nmid d$, so $p\nmid P=ad$. Also $Q=-b^2cd$, so $p\mid Q$. Reducing the recurrence $U_{n+2}=PU_{n+1}-QU_n$ modulo $p$ therefore gives
\[
U_{n+2}\equiv P\,U_{n+1}\pmod p.
\]
Using the above fact, it is now very simple to prove by induction that
\[
U_n\equiv P^{n-1}\pmod p
\qquad(n\ge 1).
\]
The $n=1$ case is immediately verified since $U_1=1=P^0$.
Now to deal with the general case: if $U_n\equiv P^{n-1}\pmod p$, then
\[
U_{n+1}\equiv P\,U_n\equiv P\cdot P^{n-1}=P^n\pmod p,
\]
this completes the induction, implying
\[
U_n\equiv P^{n-1}\pmod p
\qquad(n\ge 1).
\]
Since $p\nmid P$, we have $P^{n-1}\not\equiv 0\pmod p$, and therefore
\[
U_n\not\equiv 0\pmod p
\qquad(n\ge 1).
\]
Now we know that for all $n\geq 1$
\begin{equation}\label{eq:Un-unit}
\gcd(U_n,b)=1.
\end{equation}

Now we can use Lemma~\ref{lem:companion},
\[
T_n=KS_1U_n-QS_0U_{n-1}
=bd\,yU_n+b^2cd\,xU_{n-1}
=bd\bigl(yU_n+bc\,xU_{n-1}\bigr).
\]
Since $\gcd(b,d)=1$, the condition that $b^n\mid T_n$ is equivalent to the equality:
\begin{equation}\label{eq:Bn-1}
yU_n+bc\,xU_{n-1}\equiv 0\pmod{b^{\,n-1}}.
\end{equation}

By \eqref{eq:Un-unit}, we have $\gcd(U_n,b)=1$, so $U_n$ has an inverse modulo $b^{n-1}$.
Hence we can isolate $y$ in \eqref{eq:Bn-1} as follows
\[
y\equiv -bc\,x\,U_{n-1}U_n^{-1}\pmod{b^{n-1}},
\]
where $U_n^{-1}$ is the inverse of $U_n$ modulo $b^{n-1}$.
To simplify notation we can set
\[
\rho_n:=-bc\,U_{n-1}U_n^{-1}\pmod{b^{n-1}}.
\]
Then \eqref{eq:Bn-1} simplifies to
\[
y\equiv x\,\rho_n\pmod{b^{n-1}}.
\]
This does look like it could just be a sufficient and necessary condition to satisfy the $p$-adic inequalities for regular primes: just select $x$ and $y$ to be a constant multiple apart modulo $b^{n-1}$. However, we run into an obstacle using this logic, if for some $i,j$ we have that $\rho_i\neq\rho_j$, since $x$ and $y$ cannot change, certain indices with different $\rho$ values could wind up not being able to satisfy the $p$-adic inequalities for regular primes. To ensure that this will not happen, we need to show that if $m\ge n\ge 1$, then
\begin{equation}\label{eq:rho-compat}
\rho_m\equiv \rho_n\pmod{b^{\,n-1}}.
\end{equation}
To prove this we use an unconventional tool, specifically Cassini's identity for Lucas sequences, which states
\[
U_{t+1}U_{t-1}-U_t^2=-Q^{t-1}\qquad(t\ge 1).
\]
From this
\[
\rho_{t+1}-\rho_t
=-bc\left(\frac{U_t}{U_{t+1}}-\frac{U_{t-1}}{U_t}\right)
=-bc\cdot\frac{U_t^2-U_{t+1}U_{t-1}}{U_{t+1}U_t}
=-bc\cdot\frac{Q^{t-1}}{U_{t+1}U_t}.
\]
Since $Q=-b^2cd$ is divisible by $b^2$ and $U_tU_{t+1}$ is relatively prime to $b$ because of
\eqref{eq:Un-unit}, we have the equation:
\[
\rho_{t+1}-\rho_t
\equiv 0\pmod{b^{\,2t-1}}.
\]
This implies that, for every $t\ge n$,
\[
\rho_{t+1}-\rho_t\equiv 0\pmod{b^{\,n-1}}.
\]
Now summing this difference for all possible values of $t$ we learn
\[
\rho_m-\rho_n
=
\sum_{t=n}^{m-1}(\rho_{t+1}-\rho_t)
\equiv 0\pmod{b^{\,n-1}},
\]
since each summand is congruent to \(0\) modulo \(b^{\,n-1}\), rearranging the equation will prove \eqref{eq:rho-compat}. Now we actually choose initial conditions such that the set of regular prime $p$-adic inequalities is solved for the desired set of indices $\mathcal N=\{n_1<\cdots<n_k\}$. First set $n_*:=n_k$, let $x=1$ and choose
$y$ such that
\[
y\equiv \rho_{n_*}\pmod{b^{\,n_*-1}}.
\]
Then for each $n_i\in\mathcal N$, we learn from \eqref{eq:rho-compat} that
\[
\rho_{n_*}\equiv \rho_{n_i}\pmod{b^{\,n_i-1}},
\]
finally achieving that
\[
y\equiv \rho_{n_i}\pmod{b^{\,n_i-1}}.
\]
This implies that \eqref{eq:Bn-1} holds for every $n_i\in\mathcal N$, and gives us the desired divisibility expression
\[
b^{n_i}\mid T_{n_i}
\qquad(1\le i\le k).
\]
Finally, since $x=1$ is coprime to everything, the coprimality of the initial terms is confirmed.
\end{proof}
\begin{runningexample}[Proposition~\ref{prop:B-global-glue} (regular prime $p=3$)]
In our running example we have that $b=3$ and $c=1$. As described in the procedure we take $x=S_0=1$.  For $(P,Q)=(2,-18)$ the needed terms of the Lucas sequence are
\[
U_2=2,\quad U_3=22,\quad U_6=2552,\quad U_7=15112.
\]
To make the valuation at the regular prime $p=3$ large enough at the largest target index $n_\ast=7$, we solve the congruence
\[
yU_7+3U_6\equiv 0\pmod{3^{6}}.
\]
Since $\gcd(U_7,3)=1$, there is a unique solution modulo $3^6$:
\[
y\equiv -3U_6\,U_7^{-1}\pmod{3^6},
\]
and this solution turns out to be $y\equiv 309\pmod{729}$.
For \(n=3\), the congruence is
\[
yU_3+3U_2\equiv 0\pmod{3^2}.
\]
Since \(\gcd(U_3,3)=1\), this is equivalent to
\[
y\equiv -3U_2\,U_3^{-1}\pmod{3^2}.
\]
Since
\[
-3U_2\,U_3^{-1}\equiv 3\pmod{3^2},
\]
we have that
\[
y\equiv 3\pmod 9.
\]
The residue $309$ satisfies this congruence, so as we had hoped single choice of $y$ satisfies both congruences $\nu_3(T_3)\ge 3$ and $\nu_3(T_7)\ge 7$.
\end{runningexample}

Now, to handle the $p$-adic inequalities for special primes, we use a matrix argument. Since matrices keep track of the valuations of the relevant Lucas terms and of their unit parts, we use them to track $p$-adic information about terms in the sequence in ways that the previous arguments could not.

\begin{lemma}\label{lem:special-coprime-clean}
Let $p\mid d$ be a special prime, set $e:=\nu_p(d)$ and
\[
P=p^eP_p,\qquad Q=p^eQ_p,
\]
with $p\nmid P_pQ_p$. Let $m\ge1$ satisfy $p\nmid m$. Then
\[
U_{2m}=p^{em}u_{2m},\qquad U_{2m+1}=p^{em}u_{2m+1},
\]
where $u_{2m},u_{2m+1}\in\Z$ and $p\nmid u_{2m}u_{2m+1}$. Then
\[
r_p(m):=-\frac{u_{2m+1}}{bc\,u_{2m}}
\]
is a $p$-adic unit. Secondly, there is a positive integer $\ell_p$, depending only on $p$ and the recurrence, such that whenever $s,t\ge1$, $p\nmid st$, $N\ge1$, and
\[
s\equiv t\pmod{\ell_p p^N},
\]
$r_p(m)$ has the special property that
\[
r_p(s)\equiv r_p(t)\pmod{p^N}.
\]
\end{lemma}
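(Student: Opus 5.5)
The plan is to strip off the powers of $p$ from the Lucas terms in pairs, and then reduce both claims to statements about a single integer $2\times 2$ matrix that lies in $\mathrm{GL}_2(\Z_p)$.

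\textbf{Normalization.} Since $p\mid d$ and $\gcd(a,d)=\gcd(b,d)=\gcd(c,d)=1$, we have $p\nmid abc$. Hence $\nu_p(P)=\nu_p(ad)=e$ and $\nu_p(Q)=\nu_p(b^2cd)=e$, which gives the stated $P_p,Q_p$ with $p\nmid P_pQ_p$. I would define $u_n$ by $U_n=p^{e\lfloor n/2\rfloor}u_n$ and show by induction that each $u_n$ is an integer. Substituting into $U_{n+2}=PU_{n+1}-QU_n$ and separating even and odd indices gives
\[
u_{2m+2}=P_p\,u_{2m+1}-Q_p\,u_{2m},\qquad u_{2m+3}=p^eP_p\,u_{2m+2}-Q_p\,u_{2m+1},
\]
with $u_0=0$ and $u_1=1$. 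Both right-hand sides have integer coefficients, so integrality propagates.

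\textbf{Units.} Reducing the two recurrences modulo $p$ gives $u_{2m+1}\equiv(-Q_p)^m$. From this, a one-line induction gives
\[
u_{2m}\equiv m\,P_p(-Q_p)^{m-1}\pmod p .
\]
Therefore $u_{2m+1}$ is always a $p$-adic unit, and $u_{2m}$ is a unit exactly when $p\nmid m$. This proves the first assertion, with $U_{2m}=p^{em}u_{2m}$ and $U_{2m+1}=p^{em}u_{2m+1}$. Because $p\nmid bc$, the quotient $r_p(m)=-u_{2m+1}/(bc\,u_{2m})$ is a $p$-adic unit whenever $p\nmid m$.

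\textbf{Matrix form.} Write $v_m=(u_{2m},u_{2m+1})^{T}$. Composing the two steps gives $v_{m+1}=A\,v_m$ for an explicit integer matrix $A$, and hence $v_m=A^m v_0$ with $v_0=(0,1)^T$. Each step is a matrix of the form $\begin{pmatrix}\ast & \ast\\ -Q_p & \ast\end{pmatrix}$ or its analogue, with determinant a unit times $Q_p$. So $\det A=Q_p^2$ up to sign, which makes $A$ invertible over $\Z_p$.

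\textbf{Periodicity.} Let $\ell_p$ be the order of $A \bmod p$ in $\mathrm{GL}_2(\F_p)$; it depends only on $p$ and the recurrence. Then $A^{\ell_p}=I+pB$ with $B$ an integer matrix. By the binomial theorem (only one matrix is involved, so commutativity is not an issue), $(I+pB)^{p^{N-1}}\equiv I\pmod{p^N}$. This also holds for $p=2$, by the same lifting argument as for $(1+2x)^{2^{k}}\equiv1\pmod{2^{k+1}}$. Thus $A^{\ell_p p^{N-1}}\equiv I\pmod{p^N}$. So if $s\equiv t\pmod{\ell_pp^N}$, then $A^s\equiv A^t$, hence $v_s\equiv v_t\pmod{p^N}$. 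When $p\nmid st$, the denominators $bc\,u_{2s}$ and $bc\,u_{2t}$ are units, so the quotients satisfy $r_p(s)\equiv r_p(t)\pmod{p^N}$.

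The step I expect to be the main obstacle is getting the $p$-power exponents right in the normalization: checking that pairing indices as $(2m,2m+1)$ makes every $u_n$ integral, and that no stray power of $p$ ends up in the denominator. The lifting bound for the period of $A$, including the case $p=2$, is the other point that needs care.
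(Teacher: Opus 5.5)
Your proposal is correct and is essentially the paper's proof: the matrix you get by composing the two normalized scalar steps is exactly the paper's $A_p$ (which the paper obtains from $M^2=p^eA_p$), with the coordinates swapped. The mod-$p$ computation $\bigl((-Q_p)^m,\,mP_p(-Q_p)^{m-1}\bigr)$ and the argument via the order of $A_p$ in $\mathrm{GL}_2(\F_p)$ plus binomial lifting also match the paper, and your sharper exponent $\ell_p p^{N-1}$ (including the $p=2$ check) is valid and merely tightens the paper's step.
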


\begin{proof}
Start by defining:
\[
M=\begin{pmatrix}P&-Q\\1&0\end{pmatrix}.
\]
This encodes the recurrence into matrix form, since
\[
M\begin{pmatrix}U_{n+1}\\U_n\end{pmatrix}
=
\begin{pmatrix}PU_{n+1}-QU_n\\U_{n+1}\end{pmatrix}
=
\begin{pmatrix}U_{n+2}\\U_{n+1}\end{pmatrix}.
\]
Because \(\binom{U_1}{U_0}=\binom{1}{0}\), it follows by induction that
\[
\binom{U_{n+1}}{U_n}=M^n\binom{1}{0}.
\]
Now, we want to separate the even terms in our sequence from the odd terms, and determine their $p$-adic valuation, so we write out:
\[
M^2=
\begin{pmatrix}
P^2-Q & -PQ\\
P & -Q
\end{pmatrix}
=
p^e
\begin{pmatrix}
p^eP_p^2-Q_p & -p^eP_pQ_p\\
P_p & -Q_p
\end{pmatrix},
\]
to simplify our notation we write
\[
A_p:=
\begin{pmatrix}
p^eP_p^2-Q_p & -p^eP_pQ_p\\
P_p & -Q_p
\end{pmatrix}.
\]
It is important to note that this is an \emph{integer matrix}. Then
\[
\binom{U_{2m+1}}{U_{2m}}
=M^{2m}\binom{1}{0}
=p^{em}A_p^m\binom{1}{0}.
\]
Since $A_p$ has integer entries, the vector
\[
A_p^m\binom{1}{0}
\]
has integer coordinates. Thus there exist integers $a_m$ and $b_m$ such that
\[
A_p^m\binom{1}{0}=\binom{a_m}{b_m}.
\]
Therefore
\[
\binom{U_{2m+1}}{U_{2m}}=p^{em}\binom{a_m}{b_m},
\]
so $U_{2m+1}/p^{em}$ and $U_{2m}/p^{em}$ are integers. Modulo $p$, we then have
\[
A_p\equiv
\begin{pmatrix}
-Q_p&0\\
P_p&-Q_p
\end{pmatrix}
\pmod p.
\]
Therefore
\[
A_p^m\binom{1}{0}
\equiv
\binom{(-Q_p)^m}{mP_p(-Q_p)^{m-1}}
\pmod p.
\]
Since $p\nmid mP_pQ_p$, both terms of the matrix are nonzero modulo $p$. This shows that indeed
\[
\nu_p(U_{2m+1})=\nu_p(U_{2m})=em.
\]
Now since we know that the exact $p$-adic valuation of $U_{2n} \text{ and } U_{2n+1}$ is $em$, we can write
\[
U_{2m}=p^{em}u_{2m},\qquad U_{2m+1}=p^{em}u_{2m+1},
\]
with $p\nmid u_{2m}u_{2m+1}$. Since $p\nmid bc$, it is immediate that
\[
r_p(m)=-\frac{u_{2m+1}}{bc\,u_{2m}}\in\Z_p^\times.
\]

Now we prove the second part of the theorem. First note that the determinant of $A_p$ is $Q_p^2$, so $A_p$ is invertible modulo $p$. This implies that the image of $A_p$ in $\operatorname{GL}_2(\F_p)$ is an element with finite order. Now we can choose some positive integer $\ell_p$ such that
\[
A_p^{\ell_p}\equiv I\pmod p.
\]
A concrete example of a suitable $\ell_p$ could be
\[
\ell_p=\#\operatorname{GL}_2(\F_p)=(p^2-1)(p^2-p).
\]
Now we proceed with a direct proof. Assume that $s,t\ge1$, $p\nmid st$, $N\ge1$, and
\[
s\equiv t\pmod{\ell_p p^N}.
\]
Without loss of generality we can also assume that $s\ge t$. From the definition of congruence mod $\ell_p p^N$, we have:
\[
s-t=\ell_p p^N q
\]
for some integer $q\ge0$. To simplify our notation even further, set $B_p:=A_p^{\ell_p}$. Since $B_p\equiv I\pmod p$, we can write, for some matrix $C_p$ over $\Z_p$, $B_p=I+pC_p$. By the binomial expansion of
\[
B_p^{p^Nq}=(I+pC_p)^{p^Nq},
\]
every nonconstant term is divisible by $p^N$. Hence
\[
B_p^{p^Nq}\equiv I\pmod{p^N}.
\]
Therefore
\[
A_p^s=A_p^t\left(A_p^{\ell_p}\right)^{p^Nq}=A_p^tB_p^{p^Nq}\equiv A_p^t\pmod{p^N}.
\]
If we multiply both sides by $\binom{1}{0}$ we achieve
\[
A_p^s\binom{1}{0}\equiv A_p^t\binom{1}{0}\pmod{p^N}.
\]
Now because of the identity
\[
A_p^m\binom{1}{0}=\binom{u_{2m+1}}{u_{2m}},
\]
we are able to obtain that
\[
u_{2s+1}\equiv u_{2t+1}\pmod{p^N},
\qquad
u_{2s}\equiv u_{2t}\pmod{p^N}.
\]
Recalling the first part of the proof, we know that $u_{2s}$ and $u_{2t}$ are $p$-adic units, so their inverses are congruent modulo $p^N$. Therefore
\[
-\frac{u_{2s+1}}{bc\,u_{2s}}
\equiv
-\frac{u_{2t+1}}{bc\,u_{2t}}
\pmod{p^N},
\]
this proves the lemma, since the above statement is exactly equivalent to:
\[
r_p(s)\equiv r_p(t)\pmod{p^N}.\qedhere
\]
\end{proof}

\begin{proposition}\label{prop:single-index}

Let $p\mid d$ be a special prime, and set $\beta:=\nu_p(d)$. Let $m\ge1$ satisfy $\gcd(m,d)=1$. Suppose that $S_n$ is a solution to the recurrence \eqref{eq:Sn-recurrence} with coprime initial terms $S_0,S_1$, both of which are coprime to $d$. Finally, let $K=bd$ and $T_n:=K^nS_n$. Then
\[
\nu_p(T_{2m+1}) \ge (2m+1)\nu_p(K)
\quad\text{if and only if}\quad
\frac{S_0}{S_1} \equiv r_p(m)\mod p^{\beta m}.
\]
\end{proposition}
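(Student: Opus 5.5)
The plan is to compute $\nu_p(T_{2m+1})$ exactly, using Lemma~\ref{lem:companion} together with the factorization of the Lucas terms from Lemma~\ref{lem:special-coprime-clean}. The valuation inequality should then reduce to a congruence in $\Z_p$.

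First I record the valuations. Since $p\mid d$ and $\gcd(b,d)=1$, we have $p\nmid b$, so $\nu_p(K)=\nu_p(bd)=\beta$. Moreover $Q=-b^2cd$ with $p\nmid bc$, so $\nu_p(Q)=\beta$. Likewise $P=ad$ with $p\nmid a$, so $\nu_p(P)=\beta$. Hence the $e$ of Lemma~\ref{lem:special-coprime-clean} equals $\beta$. Because $\gcd(m,d)=1$ we have $p\nmid m$, so that lemma applies and gives
\[
U_{2m}=p^{\beta m}u_{2m},\qquad U_{2m+1}=p^{\beta m}u_{2m+1},\qquad p\nmid u_{2m}u_{2m+1}.
\]

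Next I apply Lemma~\ref{lem:companion} with $n=2m+1$ and factor:
\[
T_{2m+1}=KS_1U_{2m+1}-QS_0U_{2m}=p^{\beta m}\,bd\,\bigl(S_1u_{2m+1}+bc\,S_0u_{2m}\bigr).
\]
Taking valuations gives $\nu_p(T_{2m+1})=\beta m+\beta+\nu_p\bigl(S_1u_{2m+1}+bcS_0u_{2m}\bigr)$. Therefore the inequality $\nu_p(T_{2m+1})\ge(2m+1)\beta$ is equivalent to
\[
\nu_p\bigl(S_1u_{2m+1}+bc\,S_0u_{2m}\bigr)\ge \beta m.
\]

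Finally I factor out units. By hypothesis $S_1$ is coprime to $d$, so $p\nmid S_1$. Also $bc\,u_{2m}$ is a $p$-adic unit. So in $\Z_p$ we can write
\[
S_1u_{2m+1}+bcS_0u_{2m}=bc\,u_{2m}S_1\left(\frac{S_0}{S_1}+\frac{u_{2m+1}}{bc\,u_{2m}}\right)=bc\,u_{2m}S_1\left(\frac{S_0}{S_1}-r_p(m)\right).
\]
Since the prefactor $bc\,u_{2m}S_1$ is a unit, the condition becomes $\nu_p\bigl(S_0/S_1-r_p(m)\bigr)\ge\beta m$. This is precisely $S_0/S_1\equiv r_p(m)\pmod{p^{\beta m}}$, with the congruence read in $\Z_p$, where $S_0/S_1$ makes sense because $S_1$ is a unit.

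I expect no step to be a real obstacle. The only care needed is the bookkeeping of valuations: checking that $\nu_p(K)=\nu_p(Q)=e=\beta$, and that all the discarded factors are genuinely units. That is exactly where the hypotheses $p\nmid abc$ and $p\nmid S_1$ are used. The hypothesis $p\nmid S_0$ is not needed for this equivalence.
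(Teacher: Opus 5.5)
Your proof is correct and follows the paper's argument essentially step for step. You expand $T_{2m+1}$ via Lemma~\ref{lem:companion}, pull out $p^{\beta m}$ using Lemma~\ref{lem:special-coprime-clean}, and then divide by the $p$-adic unit $bc\,u_{2m}S_1$; your remark that only $p\nmid S_1$ (and not $p\nmid S_0$) is needed is also accurate, since the paper records $p\nmid x$ but never uses it.
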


\begin{proof}
For neatness write $S_0=x$ and $S_1=y$. Now using Lemma~\ref{lem:companion} for the case where $n=2m+1$,
\[
T_{2m+1}=K yU_{2m+1}-QxU_{2m}=bd\,yU_{2m+1}+b^2cd\,xU_{2m}.
\]
Set $\beta:=\nu_p(d)=\nu_p(Q)=\nu_p(K)$, keep in mind that the last equality holds because $p\nmid b$. Since we know that $\nu_p(d)=\beta$, we can write $d=p^\beta d'$ with $p\nmid d'$. Furthermore, because \(p\mid d\) and \(\gcd(xy,d)=1\), we have \(p\nmid x\) and \(p\nmid y\). Also
\(p\nmid b\) and \(p\nmid c\), because \(\gcd(b,d)=\gcd(c,d)=1\). These long list of equivalences allow us to write: 
\[
T_{2m+1}=p^\beta b d'\bigl(yU_{2m+1}+bc\,xU_{2m}\bigr).
\]
Using Lemma~\ref{lem:special-coprime-clean} we can write:
$U_{2m}=p^{\beta m}u_{2m}$ and $U_{2m+1}=p^{\beta m}u_{2m+1}$ with $p\nmid u_{2m}u_{2m+1}$, hence
\[
T_{2m+1}=p^{\beta(m+1)} b d' \bigl(yu_{2m+1}+bc\,x u_{2m}\bigr).
\]
Therefore $\nu_p(T_{2m+1})\ge (2m+1)\beta$ is equivalent to
\[
yu_{2m+1}+bc\,x u_{2m}\equiv 0\pmod{p^{\beta m}}.
\]
Since $p\nmid bc\,y\,u_{2m}$, this is equivalent to
\[
\frac{x}{y}\equiv -\frac{u_{2m+1}}{bc\,u_{2m}} = r_p(m)\pmod{p^{\beta m}}.
\]
This proves the proposition.
\end{proof}

\begin{proposition}\label{prop:special-many-even}
For all $k\geq1$ there exist a set of integers
\[
1\le m_1<\cdots<m_k,\qquad \gcd(m_j,bd)=1\quad (1\le j\le k),
\]
and a residue class defined by
\[
y\equiv y_d \pmod{d^{m_k}},
\]
with \(\gcd(y_d,d)=1\), such that for every integer \(y\) satisfying
\[
y\equiv y_d \pmod{d^{m_k}},
\]
the recurrence \eqref{eq:Sn-recurrence} with initial terms \((S_0,S_1)=(1,y)\)
will satisfy the $p$-adic termwise inequality
\[
\nu_p\!\left(T_{2m_j+1}\right)\ge (2m_j+1)\nu_p(K)
\]
for every special prime \(p\mid d\) and every \(1\le j\le k\).
\end{proposition}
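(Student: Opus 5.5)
The plan is to take $S_0=1$ throughout and build the indices $m_j$ so that, for each special prime $p$, the single target residues $r_p(m_j)$ from Lemma~\ref{lem:special-coprime-clean} are compatible with one another. Then a single choice of $y$ modulo $d^{m_k}$ handles every index at once. The argument parallels the $\rho$-compatibility step in Proposition~\ref{prop:B-global-glue}. The difference is that compatibility is now \emph{forced by the choice of indices}, via the periodicity statement of Lemma~\ref{lem:special-coprime-clean}, rather than holding automatically. Throughout, $\beta_p:=\nu_p(d)$ for each special prime $p\mid d$, and $\ell_p$ is as in Lemma~\ref{lem:special-coprime-clean}.

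\textbf{Step 1: choice of the indices.} Set $m_1:=1$, which is coprime to $bd$. Inductively, define
\[
m_{j+1}:=m_j+M_j,\qquad M_j:=b\prod_{p\mid d}\ell_p\,p^{\beta_p m_j}.
\]
\begin{itemize}
\item Since $M_j\ge 1$, the sequence is strictly increasing.
\item Every prime dividing $bd$ divides $M_j$, because $\beta_p m_j\ge1$ for each $p\mid d$. Hence $m_{j+1}\equiv m_j$ modulo every prime of $bd$, and by induction $\gcd(m_j,bd)=1$ for all $j$.
\item For $i<k$ we have $m_k-m_i=\sum_{t=i}^{k-1}M_t$. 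Each $M_t$ with $t\ge i$ is divisible by $\ell_p p^{\beta_p m_t}$, and since $m_t\ge m_i$ this is divisible by $\ell_p p^{\beta_p m_i}$. So $m_k\equiv m_i \pmod{\ell_p p^{\beta_p m_i}}$ for every special $p$.
\end{itemize}
Both $m_i$ and $m_k$ are coprime to $p$. Applying Lemma~\ref{lem:special-coprime-clean} with $N=\beta_p m_i$ therefore gives
\[
r_p(m_i)\equiv r_p(m_k)\pmod{p^{\beta_p m_i}}\qquad(1\le i\le k).
\]

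\textbf{Step 2: choice of $y_d$.} For each special $p$, $r_p(m_k)$ is a $p$-adic unit by Lemma~\ref{lem:special-coprime-clean}. Hence $r_p(m_k)^{-1}$ makes sense modulo $p^{\beta_p m_k}$ and is a unit there. The $p$-part of $d^{m_k}$ is exactly $p^{\beta_p m_k}$. So by the Chinese remainder theorem there is a $y_d$ with
\[
y_d\equiv r_p(m_k)^{-1}\pmod{p^{\beta_p m_k}}\quad\text{for every } p\mid d,
\]
and this $y_d$ automatically satisfies $\gcd(y_d,d)=1$.

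\textbf{Step 3: verification.} Let $y\equiv y_d\pmod{d^{m_k}}$ and take $(S_0,S_1)=(1,y)$. Then $\gcd(1,y)=1$, and both initial terms are coprime to $d$, so the hypotheses of Proposition~\ref{prop:single-index} hold. For each $j$ and each special $p$ we have $m_j\le m_k$, so reducing the congruence of Step 2 gives
\[
\frac{S_0}{S_1}=y^{-1}\equiv r_p(m_k)\equiv r_p(m_j)\pmod{p^{\beta_p m_j}},
\]
using Step 1 for the last congruence. Since $\gcd(m_j,d)=1$, Proposition~\ref{prop:single-index} now gives $\nu_p(T_{2m_j+1})\ge(2m_j+1)\nu_p(K)$, as required.

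The main obstacle is Step 1. A naive choice of indices gives no reason for the residues $r_p(m_j)$ to agree to the needed precision, and the indices must also stay coprime to $bd$ so that Lemma~\ref{lem:special-coprime-clean} and Proposition~\ref{prop:single-index} apply. Both requirements are met by including the factor $b$ and the full prime support of $d$ in each increment $M_j$, with $p$-power exponents growing alongside $m_j$.
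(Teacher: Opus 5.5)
Your proposal is correct and follows essentially the same route as the paper. Both arguments fix $S_0=1$ and space the indices so that Lemma~\ref{lem:special-coprime-clean} forces $r_p(m_j)\equiv r_p(m_k)\pmod{p^{\beta_p m_j}}$, then choose $y_d$ by the Chinese remainder theorem from the residues $r_p(m_k)^{-1}$ and verify through Proposition~\ref{prop:single-index}. The differences are cosmetic: your increment $b\prod_{p\mid d}\ell_p\,p^{\beta_p m_j}=b\bigl(\prod_{p\mid d}\ell_p\bigr)d^{m_j}$ is one explicit choice within the paper's condition $m_{j+1}\equiv m_j\pmod{bEd^{m_j}}$ (with $E=\operatorname{lcm}_{p\mid d}\ell_p$), and you apply the lemma once to each pair $(m_i,m_k)$ instead of chaining it index by index.
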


\begin{proof}
Start with an arbitrary $k\ge1$ and set $S_0=1$, $S_1=y$. For a special prime $p\mid d$, Proposition~\ref{prop:single-index} gives
\begin{equation}\label{eq:crit-special}
\nu_p(T_{2m+1}) \ge (2m+1)\nu_p(K)
\quad\text{if and only if}\quad
\frac{1}{y} \equiv r_p(m)\pmod{p^{\nu_p(d)\,m}}.
\end{equation}

For each special prime $p\mid d$, let $\ell_p$ be the integer supplied by Lemma~\ref{lem:special-coprime-clean}, and set
\[
E:=\operatorname{lcm}_{p\mid d}\ell_p.
\]
We will now choose indices $m_1<\cdots<m_k$ so that the inequality on right-hand side of \eqref{eq:crit-special} is true for all chosen indices $m_j$. Start with $m_1:=1$, and suppose $m_j$ is an arbitrary whole number with the property that $\gcd(m_j,bd)=1$. Now choose an $m_{j+1}>m_j$ satisfying
\begin{equation}\label{eq:spacing}
m_{j+1}\equiv m_j \pmod{bE d^{m_j}}.
\end{equation}
Now using \eqref{eq:spacing}:
\[
m_{j+1}\equiv m_j\pmod b
\qquad\text{and}\qquad
m_{j+1}\equiv m_j\pmod d,
\]
showing that every prime dividing $bd$ divides $m_{j+1}$ if and only if it divides $m_j$. Since $\gcd(m_j,bd)=1$, it clearly follows that $\gcd(m_{j+1},bd)=1$. Fix a special prime $p\mid d$ and write $\beta:=\nu_p(d)$. From \eqref{eq:spacing}, and because $E$ is divisible by $\ell_p$, we have
\[
m_{j+1}\equiv m_j\pmod{\ell_p p^{\beta m_j}}.
\]
Lemma~\ref{lem:special-coprime-clean} therefore gives
\[
r_p(m_{j+1})\equiv r_p(m_j)\pmod{p^{\beta m_j}}.
\]
Continuing up inductively allows us to deduce that 
\[
r_p(m_k)\equiv r_p(m_j)\pmod{p^{\beta m_j}}
\qquad\text{for every } j\le k.
\]
This compatibility property is absolutely key, since once $m_j$ has been chosen so that the desired congruence modulo $p^{\beta m_j}$ holds, the same congruence continues to hold for every later $m_k$. Thus the condition for integrality at stage $j$ is the same throughout the rest of the construction of integer terms. Now for each special prime $p\mid d$, write $\beta=\nu_p(d)$. Choose $y$ so that
\[
\frac1y\equiv r_p(m_k)\pmod{p^{\beta m_k}}.
\]
Since $r_p(m_k)\in\Z_p^\times$, this is equivalent to
\[
y\equiv r_p(m_k)^{-1}\pmod{p^{\beta m_k}}.
\]
The set of moduli $p^{\beta m_k}$, as $p$ varies over all the primes dividing $d$, are pairwise coprime, so the Chinese remainder theorem can determine a single residue class
\[
y\equiv y_d\pmod{\prod_{p\mid d}p^{\nu_p(d)m_k}}
=
y_d\pmod{d^{m_k}}.
\]
Furthermore, each residue $r_p(m_k)^{-1}$ is a $p$-adic unit, so it is not divisible by $p$. Therefore the resulting CRT class $y_d$ is not divisible by any prime $p\mid d$, and hence $\gcd(y_d,d)=1$. Now let $y$ be any integer satisfying $y\equiv y_d\pmod{d^{m_k}}$. Then, for each $p\mid d$,
\[
y\equiv r_p(m_k)^{-1}\pmod{p^{\beta m_k}}.
\]
Since both sides are $p$-adic units, we can take their inverse:
\[
\frac1y\equiv r_p(m_k)\pmod{p^{\beta m_k}}.
\]
Reducing this modulo $p^{\beta m_j}$ and then using the property that $r_p(m)$ are compatible:
\[
r_p(m_k)\equiv r_p(m_j)\pmod{p^{\beta m_j}},
\]
we obtain
\[
\frac1y\equiv r_p(m_j)\pmod{p^{\beta m_j}}.
\]
Proposition~\ref{prop:single-index} shows that this is exactly
\[
\nu_p\!\left(T_{2m_j+1}\right)\ge (2m_j+1)\nu_p(K).
\]
This holds for every special prime $p\mid d$ and every $1\le j\le k$.
\end{proof}
\begin{runningexample}[Special prime $p=2$ at the target indices]
To show how the given specific choice for $y$ can satisfy all the desired special prime $p$-adic inequalities at the target indices, we use the example auxiliary recurrence $T_n=6yU_n+18xU_{n-1}$. At the first target index $n=3$ we find that the desired terms in the corresponding Lucas sequence are $U_3=22$ and $U_2=2$, so
\[
T_3=6y\cdot 22+18\cdot 2=132y+36=4(33y+9).
\]
Thus $\nu_2(T_3)\ge 3$ is the same thing as $33y+9\equiv 0\pmod 2$. Next, moving on to the second target index of $n=7$ we have $U_7=15112$ and $U_6=2552$, so
\[
T_7=6y\cdot 15112+18\cdot 2552=90672y+45936=16(5667y+2871).
\]
Thus $\nu_2(T_7)\ge 7$ is equivalent to $5667y+2871\equiv 0\pmod 8$.
Reducing modulo $8$ gives $3y+7\equiv 0\pmod 8$, thus
\[
y\equiv 3\pmod 8.
\]
Furthermore, $y$ being congruent to $3$ mod $8$ also satisfies the first congruence: if $y\equiv 3\pmod 8$, then
\[
33y+9\equiv 33\cdot 3+9=108\equiv 0\pmod 2,
\]
so $T_3=4(33y+9)$ satisfies $\nu_2(T_3)\ge 3$.
\end{runningexample}
We now have two different methods which each generate a unique set of initial conditions where either the special and regular prime $p$-adic inequalities are solved. Now we outline the delicate technique which enables us to glue the local conditions for regular and special primes using the Chinese remainder theorem. This obtains coprime initial terms $(S_0,S_1)$ which force integrality at all desired indices simultaneously. Thus, with a large enough set of desired indices you can construct initial conditions which make any number of desired terms in the recurrence integers.

\begin{proposition}\label{prop:odd-indices}
Let $S_n$ be a sequence that satisfies the recurrence given in \eqref{eq:Sn-recurrence} and the coprimality assumptions of Remark~\ref{rem:normalizations}.
Then, for all $k\geq 1$ there must exist coprime integers $S_0,S_1$ and positive integers
$m_1<\cdots<m_k$, with
\[
\gcd(m_j,bd)=1 \quad (1\le j\le k),
\]
such that $S_{2m_j+1}\in\Z$ for each $1\le j\le k$.
\end{proposition}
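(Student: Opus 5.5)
We will glue Proposition~\ref{prop:B-global-glue} (regular primes) and Proposition~\ref{prop:special-many-even} (special primes) using the Chinese remainder theorem, with $S_0=1$ throughout. First fix $k\ge1$. Then apply Proposition~\ref{prop:special-many-even} to obtain indices $1\le m_1<\cdots<m_k$ with $\gcd(m_j,bd)=1$, together with a residue class $y\equiv y_d\pmod{d^{m_k}}$ with $\gcd(y_d,d)=1$. For every $y$ in this class, the initial terms $(1,y)$ give $\nu_p(T_{2m_j+1})\ge(2m_j+1)\nu_p(K)$ for every special prime $p\mid d$ and every $j$.

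Next we run the construction in the proof of Proposition~\ref{prop:B-global-glue} with $\mathcal N=\{2m_1+1<\cdots<2m_k+1\}$ and $x=1$. This produces a residue $y\equiv y_b\pmod{b^{2m_k}}$ such that every $y$ in that class gives $b^{n}\mid T_{n}$ for all $n\in\mathcal N$. The point to check is that the proof of Proposition~\ref{prop:B-global-glue} only requires $y$ to lie in the class $\rho_{n_*}\pmod{b^{n_*-1}}$, with no other constraint on $y$. This is true, since the argument derives \eqref{eq:Bn-1} for each $n_i$ purely from that congruence and \eqref{eq:rho-compat}. Because $\gcd(b,d)=1$, the moduli $b^{2m_k}$ and $d^{m_k}$ are coprime, so CRT gives an integer $y$ satisfying both congruences simultaneously.

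Finally, we verify integrality. With $(S_0,S_1)=(1,y)$, the pair is coprime because $S_0=1$. For $n=2m_j+1$ and a regular prime $p\mid b$, we have $\nu_p(K)=\nu_p(b)$ since $p\nmid d$, so $b^n\mid T_n$ gives $\nu_p(T_n)\ge n\nu_p(K)$. For a special prime, the inequality comes from the previous step. Every prime dividing $K=bd$ is one of these two types, so $K^n\mid T_n$, and Proposition~\ref{prop:aux-recurrence} gives $S_{2m_j+1}\in\Z$.

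The only delicate point, and the one I expect to be the main obstacle, is the hypothesis check for Proposition~\ref{prop:single-index}. It was stated under the assumption that $S_0,S_1$ are coprime to $d$, and that assumption is what feeds into Proposition~\ref{prop:special-many-even}. The CRT choice keeps $y\equiv y_d\pmod{d^{m_k}}$ with $\gcd(y_d,d)=1$, and $m_k\ge1$, so $\gcd(y,d)=1$. Also $S_0=1$ is coprime to $d$. So all hypotheses are preserved after gluing, and no further compatibility between the regular and special conditions is needed, since they live modulo coprime prime powers.
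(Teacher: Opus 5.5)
Your proposal is correct and matches the paper's proof: take $S_0=1$, get the class $y\equiv y_d\pmod{d^{m_k}}$ from Proposition~\ref{prop:special-many-even}, get a class modulo $b^{2m_k}$ by running the construction of Proposition~\ref{prop:B-global-glue} on $\{2m_j+1\}$, glue the two by the Chinese remainder theorem, and check $\nu_p(T_n)\ge n\nu_p(K)$ separately for regular and special primes. Your observation that Proposition~\ref{prop:B-global-glue} must be used in residue-class form (every $y\equiv\rho_{n_*}\pmod{b^{n_*-1}}$ works, not just one $y$) is a step the paper uses only implicitly, so stating it makes the gluing more rigorous.
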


\begin{proof}
From Proposition~\ref{prop:special-many-even}, we are allowed to pick integers $m_1<\cdots<m_k$ with $\gcd(m_j,bd)=1$, we can also create a residue class
\[
y\equiv y_d\pmod{d^{m_k}},
\]
with $\gcd(y_d,d)=1$, such that for every integer $y$ in this residue class:
\[
\nu_p\bigl(T_{2m_j+1}\bigr) \ge (2m_j+1)\nu_p(K)\qquad(p\mid d,\ 1\le j\le k).
\]
Now, apply Proposition~\ref{prop:B-global-glue} to the set of indices
$\{2m_1+1<\cdots<2m_k+1\}$ with $S_0=1$ to obtain an integer $y_b$ such that, with $(S_0,S_1)=(1,y_b)$,
\[
\nu_p\bigl(T_{2m_j+1}\bigr) \ge (2m_j+1)\nu_p(b)\qquad(p\mid b,\ 1\le j\le k).
\]
Define $n_k:=2m_k+1$. Propositions~\ref{prop:B-global-glue} and~\ref{prop:special-many-even} determine the congruence classes:
$y\equiv y_b\pmod{b^{n_k-1}}$ and $y\equiv y_d\pmod{d^{m_k}}$.
Because $\gcd(b^{n_k-1},d^{m_k})=1$, the hypotheses of the Chinese remainder theorem are satisfied, and we can produce an integer $y$ satisfying both of the above congruences. Now, set the initial conditions of the sequence to be $(S_0,S_1):=(1,y)$. To verify that indeed, these initial conditions give integers at all the desired spots, we realize that, for all primes \(p\mid b\), we have \(\nu_p(K)=\nu_p(b)\), so Proposition~\ref{prop:B-global-glue} gives
\[
\nu_p(T_{2m_j+1})\ge (2m_j+1)\nu_p(K)
\qquad (1\le j\le k).
\]
In the same vein, for every prime \(p\mid d\) we have \(\nu_p(K)=\nu_p(d)\), so Proposition~\ref{prop:special-many-even} gives
\[
\nu_p(T_{2m_j+1})\ge (2m_j+1)\nu_p(K)
\qquad (1\le j\le k).
\]
Since the chosen integer \(y\) satisfies both congruences: $y\equiv y_b  \pmod{b^{n_k-1}}$
$\text{and }y\equiv y_d \pmod{d^{m_k}},$ the conclusions of Propositions~\ref{prop:B-global-glue} and~\ref{prop:special-many-even} hold simultaneously. Hence, for every prime \(p\mid K\),
\[
\nu_p(T_{2m_j+1})\ge (2m_j+1)\nu_p(K)
\qquad (1\le j\le k).
\]
Therefore, by Proposition~\ref{prop:aux-recurrence}, we have
\[
S_{2m_j+1}\in\Z
\qquad (1\le j\le k).\qedhere
\]
\end{proof}
\begin{runningexample}[CRT gluing across primes (final step)]
From Proposition~\ref{prop:B-global-glue} we obtained the $3$-adic condition
\[
y\equiv 309\pmod{3^6}\quad(\text{i.e., }y\equiv 309\pmod{729}).
\]
Now we calculate the $2$-adic conditions that $y_d$ must satisfy using the method outlined in Proposition~\ref{prop:special-many-even}:
\[
y\equiv 3\pmod 8.
\]
Since $\gcd(8,729)=1$, the Chinese remainder theorem yields a unique class modulo
$8\cdot 729=5832$, namely
\[
y\equiv 4683\pmod{5832}.
\]
Taking $x=1$ and the smallest positive representative for $y$ gives $(S_0,S_1)=(1,4683)$. When these initial conditions are substituted into the recurrence, the terms at the indices $n=3$ and $n=7$ are integers, just as desired.
\[
S_2=\tfrac{3123}{2},\quad S_3=2862,\quad S_4=\tfrac{6939}{4},\quad S_5=\tfrac{8037}{4},\quad S_6=\tfrac{12297}{8},\quad S_7=1517.
\]
\end{runningexample}

\section{Non-integrality and an explicit cutoff}\label{sec:cutoff}

This section describes how we prove Theorem~\ref{thm:main-cutoff}. As a general overview, we start by fixing a prime $p\mid d$. We write $S_n$ in Binet form, namely $\alpha r_1^{n}+\beta r_2^{n}$. Then we factor out the linear form $\Lambda^n+\Gamma^{-1}$. In order for $S_n$ to be integral for increasing values of $n$, the $p$-adic valuation of $\Lambda^n+\Gamma^{-1}$ must grow linearly with $n$. However, Chim~\cite{Chim2025} shows that the $p$-adic valuation of numbers of the form $\Lambda^n+\Gamma^{-1}$ grows at most logarithmically. Clearly, after a certain value of $n$, these bounds contradict each other. We call this value $k$. Thus, finding this mystery value of $k$ yields an exact cutoff beyond which no term can be integral.
\begin{definition}
\label{def:binet-lucas}
Let $r_1,r_2$ be the distinct roots of the polynomial:
\[
R^2-\frac{a}{b}R-\frac{c}{d}=0.
\]
This polynomial is also known as the characteristic polynomial of $S_n$, and its roots have the special property that: every solution, $S_{n+2}$, of the following recurrence: $S_{n+2}=\frac{a}{b} S_{n+1}+\frac{c}{d} S_n$ with initial terms $S_0,S_1$ can be written in the form
\[
S_n=\alpha r_1^{n}+\beta r_2^{n},
\]
where
\[
\alpha=\frac{S_1-r_2 S_0}{r_1-r_2},\qquad
\beta =\frac{r_1 S_0 - S_1}{r_1-r_2}.
\]
Everest, van der Poorten, Shparlinski, and Ward~\cite{EverestVdPShparlinskiWard2003} fully detail this special property of linear second-order recurrences.
\end{definition}

\begin{definition}\label{def:vp-extension}
Fix a prime \(p\mid d\), and set $K_0:=\Q(r_1,r_2)$. Now pick an embedding
\[
\iota_p:K_0\hookrightarrow \overline{\Q}_p.
\]
Let \(L_p/\Q_p\) be a finite extension containing \(\iota_p(K_0)\). We use this embedding to determine the extended \(p\)-adic valuation to \(L_p\). For the rest of this section, whenever \(\nu_p\) is applied to an element of \(K_0\), we always apply the chosen embedding \(\iota_p\) which we then use to determine the extended $p$-adic valuation in \(L_p\).
\end{definition}

\begin{lemma}\label{boundthing}
Assume \(\alpha\beta\neq 0\). With notation kept the same as in Definition~\ref{def:binet-lucas}, set
\(\Gamma:=\alpha/\beta\) and \(\Lambda:=r_1/r_2\). Then for every $n\ge 0$ such that $\Lambda^n+\Gamma^{-1}\neq0$, we have
\[
\nu_p(S_n)=\nu_p(\beta)+n\nu_p(r_2)+\nu_p(\Gamma)+\nu_p\big(\Lambda^n+\Gamma^{-1}\big).
\]
\end{lemma}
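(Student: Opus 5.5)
The identity is a direct factorization of the Binet form, followed by multiplicativity of the extended valuation. I will work inside $K_0$, embedded in $L_p$ via $\iota_p$ as in Definition~\ref{def:vp-extension}. All quantities $\alpha,\beta,r_1,r_2,\Gamma,\Lambda$ are then elements of $L_p$, where the extended valuation satisfies $\nu_p(XY)=\nu_p(X)+\nu_p(Y)$ for nonzero $X,Y$.

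The first step is to check that every factor is nonzero, so that all the valuations involved are finite. Since $c/d\neq 0$ (because $\gcd(c,d)=1$ with $d\ge2$ forces $c\ne0$), the product $r_1r_2=-c/d$ is nonzero. Hence $r_1,r_2\neq0$, and $\Lambda=r_1/r_2$ is defined and nonzero. The hypothesis $\alpha\beta\neq0$ gives that $\Gamma=\alpha/\beta$ is defined and nonzero, and so is $\Gamma^{-1}$.

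The second step is the algebraic identity. Starting from $S_n=\alpha r_1^n+\beta r_2^n$ in Definition~\ref{def:binet-lucas}, factor out $\beta r_2^n$:
\[
S_n=\beta r_2^n\Bigl(\tfrac{\alpha}{\beta}\Lambda^n+1\Bigr)=\beta r_2^n\,\Gamma\bigl(\Lambda^n+\Gamma^{-1}\bigr).
\]
Under the assumption $\Lambda^n+\Gamma^{-1}\ne0$, all four factors are nonzero. Applying $\nu_p$ and using multiplicativity, together with $\nu_p(r_2^n)=n\nu_p(r_2)$, gives exactly the stated formula. The same computation also shows $S_n\neq 0$, so the left side is finite as well.

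There is no real obstacle here. The only point needing care is the setup: the valuation must be the one extended through the fixed embedding $\iota_p$, so that it is a genuine valuation on the field $L_p$ and is therefore multiplicative on nonzero elements. It must also restrict to the usual $\nu_p$ on $\Q$, so that $\nu_p(S_n)$ means the ordinary $p$-adic valuation of the rational number $S_n$. Both properties are standard for the unique extension of $\nu_p$ to a finite extension of $\Q_p$, and I would simply cite them.
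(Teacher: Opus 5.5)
Your proposal is correct and matches the paper's proof: factor the Binet form as $S_n=\Gamma\beta r_2^n(\Lambda^n+\Gamma^{-1})$ and apply multiplicativity of the extended valuation. The paper leaves implicit two details that you check explicitly: that $r_1,r_2,\Gamma$ are nonzero (via $r_1r_2=-c/d\neq 0$), and that $\nu_p$ restricts to the usual valuation on $\Q$.
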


\begin{proof}
Using Definition~\ref{def:binet-lucas}, let $r_1,r_2$ be the roots of $R^2-\frac{a}{b}R-\frac{c}{d}=0$. Then every term of $S_n$ can be written as
\[
S_n=\alpha r_1^n+\beta r_2^n,
\]
with $\alpha,\beta$ determined by the initial conditions and $r_1,r_2$. Factoring out $r_2^n\beta\Gamma$ gives
\[
S_n=\Gamma\beta r_2^n\Big(\Lambda^n+\Gamma^{-1}\Big),\qquad
\Gamma:=\frac{\alpha}{\beta},\quad \Lambda:=\frac{r_1}{r_2},
\]
thus
\[
\nu_p(S_n)=\nu_p(\beta)+n\nu_p(r_2)+\nu_p(\Gamma)+\nu_p\big(\Lambda^n+\Gamma^{-1}\big).
\qedhere
\]
\end{proof}

From the hypotheses in the theorem, we must have $\nu_p(a)=\nu_p(b)=\nu_p(c)=0$ for every prime $p\mid d$. Let us now consider the characteristic equation for $S_n$:
\[
R^2-\frac{a}{b}R-\frac{c}{d}=0.
\]
To make our notation simpler: write $u:=\frac{a}{b},\space v:=\frac{c}{d},\space
\Delta:=u^2+4v$, and let $r_1,r_2$ be the roots of the aforementioned characteristic polynomial:
\[
r_{1,2}=\frac{u\pm\sqrt{\Delta}}{2}.
\]
Fixing a prime $p\mid d$ and setting $\delta:=\nu_p(d)\ge1$, since $\gcd(a,d)=\gcd(b,d)=\gcd(c,d)=1$, we have $\nu_p(u)=0,\space \nu_p(v)=-\delta$.

\begin{lemma}\label{lem:distinct-roots}
Using the hypotheses \(b,d\ge 2\), \(\gcd(a,b)=\gcd(c,d)=1\), and
\(\gcd(a,d)=\gcd(b,d)=1\). We have that $\Delta$ is nonzero and \(r_1\neq r_2\).
\end{lemma}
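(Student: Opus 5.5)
The plan is to show that $\Delta=u^2+4v\neq 0$ by a $p$-adic valuation argument at a prime $p\mid d$. The roots $r_{1,2}=(u\pm\sqrt{\Delta})/2$ then differ by $\sqrt{\Delta}\neq 0$.

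Since $d\ge 2$, we may fix a prime $p\mid d$ and set $\delta:=\nu_p(d)\ge 1$. As recorded just before the statement, the hypotheses $\gcd(a,d)=\gcd(b,d)=\gcd(c,d)=1$ give $\nu_p(u)=\nu_p(a)-\nu_p(b)=0$ and $\nu_p(v)=\nu_p(c)-\nu_p(d)=-\delta<0$. Hence $\nu_p(u^2)=0$, while $\nu_p(4v)=2\nu_p(2)-\delta$.

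There are two cases. If $p$ is odd, then $\nu_p(4v)=-\delta<0=\nu_p(u^2)$. The two summands of $\Delta$ therefore have different valuations, and the strict form of the ultrametric inequality gives $\nu_p(\Delta)=-\delta$, which is finite, so $\Delta\neq 0$. If $p=2$, the valuation of $4v$ is $2-\delta$, which could equal $0$ when $\delta=2$, so the same argument does not apply directly. This is the only delicate point. In this case $b$ is odd because $\gcd(b,d)=1$.

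To handle it, I would argue directly instead of comparing valuations. Suppose $\Delta=0$, that is, $a^2/b^2=-4c/d$. Clearing denominators gives $a^2 d=-4cb^2$. Since $\gcd(a,d)=1$, we have $d\mid 4cb^2$, and since $\gcd(d,cb^2)=1$, this forces $d\mid 4$. Similarly $b^2\mid a^2 d$ with $\gcd(b,ad)=1$ forces $b^2\mid 1$, contradicting $b\ge 2$. This direct divisibility argument in fact covers every prime at once, so I would present it as the main proof and keep the valuation computation only as motivation.

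Finally, $\Delta\neq 0$ means $\sqrt{\Delta}\neq 0$ in $K_0$, so $r_1-r_2=\sqrt{\Delta}\neq 0$. The divisions by $r_1-r_2$ in Definition~\ref{def:binet-lucas} are then legitimate. No step is really hard; the only care needed is the $p=2$ subtlety, which the divisibility argument bypasses.
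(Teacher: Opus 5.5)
Your argument is correct and is essentially the paper's: assuming $\Delta=0$ gives $a^2d=-4b^2c$, and $b^2\mid a^2d$ together with $\gcd(b,a^2d)=1$ forces $b^2=1$, contradicting $b\ge 2$. The $p$-adic motivation and the side deduction $d\mid 4$ are not needed. Your explicit remark that $r_1-r_2=\sqrt{\Delta}\neq 0$ fills in a step the paper leaves implicit.
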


\begin{proof}
If \(\Delta=0\), then $\frac{a^2}{b^2}=-\,\frac{4c}{d},$
so $a^2d=-4b^2c$. Thus \(b^2\mid a^2d\). Since \(\gcd(a,b)=1\) and \(\gcd(b,d)=1\), we see that \(\gcd(b,a^2d)=1\). This implies that \(b^2=1\), contradicting the assumption that \(b\ge 2\).
\end{proof}

\begin{lemma}
\label{lem:odd-p-roots}

If $p\ge3$ divides $d$, again setting $\delta=\nu_p(d)\ge1$, then
\[
\nu_p(r_1)=\nu_p(r_2)=-\frac{\delta}{2}.\qedhere
\]
\end{lemma}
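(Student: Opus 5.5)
The plan is to read off the valuations of the roots from the Newton polygon of the characteristic polynomial. Equivalently, I will use the two symmetric functions $r_1+r_2=u$ and $r_1r_2=-v$.

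\emph{Step 1: the product.} By Vieta, $r_1r_2=-v=-c/d$. Since $p\nmid c$ and $\nu_p(d)=\delta$, we get $\nu_p(r_1)+\nu_p(r_2)=\nu_p(r_1r_2)=-\delta$. Here $\nu_p$ is the extended valuation on $L_p$ from Definition~\ref{def:vp-extension}, and it is additive on products.

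\emph{Step 2: the sum.} Again by Vieta, $r_1+r_2=u=a/b$, and $\nu_p(u)=0$ because $p\nmid ab$. Suppose the two valuations differed, say $\nu_p(r_1)<\nu_p(r_2)$. The ultrametric equality case then gives
\[
\nu_p(r_1)=\nu_p(r_1+r_2)=0,
\]
so Step 1 forces $\nu_p(r_2)=-\delta$. But then $\nu_p(r_2)<0=\nu_p(r_1)$, contradicting $\nu_p(r_1)<\nu_p(r_2)$. Hence $\nu_p(r_1)=\nu_p(r_2)$, and Step 1 gives both equal to $-\delta/2$.

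This is exactly the Newton polygon statement. The polynomial $R^2-uR-v$ has coefficient valuations $0,0,-\delta$ at degrees $2,1,0$. The point $(1,0)$ lies on or above the segment joining $(0,-\delta)$ to $(2,0)$, so the polygon has a single slope and both roots have valuation $-\delta/2$.

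\emph{Step 3: well-definedness.} I should check that the statement does not depend on the choice of embedding $\iota_p$. It does not, since the argument only used the symmetric functions $u$ and $v$, which are rational. I do not see where $p\ge 3$ is genuinely needed. It would matter if one computed $\nu_p(\sqrt{\Delta})$ directly from $r_{1,2}=(u\pm\sqrt\Delta)/2$, because the factor $2$ is a unit only for odd $p$. As a cross-check in that style: $\nu_p(\Delta)=\nu_p(4v)=-\delta$ when $p$ is odd, since $\nu_p(u^2)=0>-\delta$. So $\nu_p(\sqrt\Delta)=-\delta/2<0=\nu_p(u)$, and each root has valuation $\nu_p(\sqrt\Delta)-\nu_p(2)=-\delta/2$.

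\emph{Main obstacle.} There is no real difficulty. The only care needed is justifying the ultrametric equality case in the extension $L_p$, which holds because $\nu_p$ extends uniquely to a non-archimedean valuation on $L_p$.
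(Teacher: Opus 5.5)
Your proof is correct, but your main route differs from the paper's. The paper proves this lemma the way you sketch as a cross-check. It shows $\nu_p(\Delta)=\nu_p(4v)=-\delta$ by the strict ultrametric inequality, deduces $\nu_p(\sqrt{\Delta})=-\delta/2<0=\nu_p(u)$ and hence $\nu_p(u\pm\sqrt{\Delta})=-\delta/2$, and finally divides by $2$, which is a unit only because $p\ge 3$. Your Vieta argument (equivalently, the one-segment Newton polygon) uses only $\nu_p(r_1+r_2)=\nu_p(u)=0$ and $\nu_p(r_1r_2)=\nu_p(v)=-\delta$. It never chooses a square root and never divides by $2$. As you suspected, it therefore works verbatim for $p=2$, where still $\nu_2(u)=0$ because $2\nmid ab$. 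So your argument also proves Lemma~\ref{lem:dyadic-roots}, and it would let the paper replace its three-case dyadic analysis by a single statement for every prime $p\mid d$. In fact, the paper's Cases~2 and~3 there are just your sum/product argument applied to $s=u+\sqrt{\Delta}$ and $t=u-\sqrt{\Delta}$, i.e.\ to $2r_1$ and $2r_2$. In exchange, the paper's route gives the extra fact $\nu_p(\Delta)=-\delta$, hence $\nu_p(r_1-r_2)=-\delta/2$ for odd $p$, although nothing later in the paper uses it.
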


\begin{proof}
Using the quadratic formula, we see that:
\[
\Delta=u^2+4v.
\]
Since this is a sum of two terms with potentially different $p$-adic valuation, we invoke the ultrametric inequality,
\[
\nu_p(\Delta)\ge \min\bigl(\nu_p(u^2),\nu_p(4v)\bigr)
=\min\bigl(2\nu_p(u),2\nu_p(2)+\nu_p(v)\bigr).
\]
We know that \(p\ge 3\), so \(\nu_p(2)=0\). Also \(\nu_p(u)=0\) and \(\nu_p(v)=-\delta\). $2\nu_p(u)=0,\space 2\nu_p(2)+\nu_p(v)=0+(-\delta)=-\delta.$ We can thus conclude that:
\[
\nu_p(\Delta)\ge \min(0,-\delta)=-\delta.
\]
Because \(-\delta<0\), the two valuations \(0\) and \(-\delta\) are unequal:
\[
\nu_p(\Delta)=\min\bigl(\nu_p(u^2),\nu_p(4v)\bigr)=-\delta.
\]

Therefore $\nu_p(\sqrt{\Delta})=\frac{1}{2}\nu_p(\Delta)=-\frac{\delta}{2}.$ Let's now compare the two terms in \(u\pm \sqrt{\Delta}\):
\[
\nu_p(u)=0,\qquad \nu_p(\sqrt{\Delta})=-\frac{\delta}{2}.
\]
Again these are unequal, since \(\delta\ge 1\). Therefore
\[
\nu_p(u\pm \sqrt{\Delta})
=\min\!\left(\nu_p(u),\nu_p(\sqrt{\Delta})\right)
=\min\!\left(0,-\frac{\delta}{2}\right)
=-\frac{\delta}{2}.
\]
Lastly, because \(p\ge 3\), we know that \(\nu_p(2)=0\), so
\[
\nu_p(r_{1,2})
=\nu_p(u\pm \sqrt{\Delta})-\nu_p(2)
=-\frac{\delta}{2}-0
=-\frac{\delta}{2}.
\]
This finally proves what we wanted:
\[
\nu_p(r_1)=\nu_p(r_2)=-\frac{\delta}{2}.
\]
\end{proof}

\begin{lemma}
\label{lem:dyadic-roots}

Let $p=2$ divide $d$ and let $\delta=\nu_2(d)\ge1$. Then we again have that
\[
\nu_2(r_1)=\nu_2(r_2)=-\frac{\delta}{2}.
\]
\end{lemma}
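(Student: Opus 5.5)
The quadratic-formula argument of Lemma~\ref{lem:odd-p-roots} breaks down at $p=2$, because $\nu_2(2)=1$ and $\nu_2(4)=2$ no longer vanish. Instead of working with $\sqrt{\Delta}$, I will use Vieta's formulas together with the Newton polygon of the characteristic polynomial. The roots satisfy
\[
r_1+r_2=u=\frac ab,\qquad r_1r_2=-v=-\frac cd .
\]
Since $2\mid d$ and $\gcd(a,d)=\gcd(b,d)=\gcd(c,d)=1$, we have $\nu_2(u)=0$ and $\nu_2(r_1r_2)=-\delta$. This gives
\[
\nu_2(r_1)+\nu_2(r_2)=-\delta .
\]

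Next I will show that the two valuations coincide. Suppose instead that $\nu_2(r_1)\neq\nu_2(r_2)$. By the equality case of the ultrametric inequality,
\[
\nu_2(r_1+r_2)=\min(\nu_2(r_1),\nu_2(r_2)).
\]
The two valuations sum to $-\delta<0$ and are unequal, so their minimum is strictly less than $-\delta/2<0$. This contradicts $\nu_2(r_1+r_2)=\nu_2(u)=0$. Hence $\nu_2(r_1)=\nu_2(r_2)$, and each equals $-\delta/2$.

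Equivalently, one can read this off the Newton polygon of $R^2-uR-v$. Its points are $(0,-\delta)$, $(1,0)$ and $(2,0)$. The point $(1,0)$ lies on or above the segment joining $(0,-\delta)$ to $(2,0)$, since that segment has height $-\delta/2<0$ at $1$. So the polygon has a single slope $\delta/2$, and both roots have valuation $-\delta/2$.

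The only subtle point is that the valuations are taken through the fixed embedding $\iota_2$ of Definition~\ref{def:vp-extension}. If $r_1,r_2$ lie in a ramified quadratic extension, the value $-\delta/2$ may be a half-integer. Nothing here depends on the embedding, however: the roots are Galois conjugate (or rational), and the argument above uses only Vieta's formulas and the ultrametric property of the extended valuation. Lemma~\ref{lem:distinct-roots} guarantees $r_1\neq r_2$, though this is not even needed for the valuation claim. I expect no real obstacle. The one step to state carefully is the strict inequality $\min<-\delta/2<0$ in the contradiction.
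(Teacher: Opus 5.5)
Your proof is correct, but it follows a different and cleaner route than the paper's. The paper works with $s=u+\sqrt{\Delta}$ and $t=u-\sqrt{\Delta}$, so that $s=2r_1$ and $t=2r_2$. It uses $s+t=2u$ and $st=-4v$, hence $\nu_2(s+t)=1$ and $\nu_2(st)=2-\delta$, and then splits into three cases. For $\delta\ge 3$ it computes $\nu_2(\Delta)=2-\delta$ directly and applies the ultrametric equality to $u\pm\sqrt{\Delta}$. For $\delta=2$ and $\delta=1$ it uses the same ``unequal valuations would contradict the valuation of the sum'' argument that you use, scaled by $2$.

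You apply Vieta's formulas to $r_1,r_2$ themselves, which removes the factor $2$. You then have $\nu_2(r_1+r_2)=0$ and $\nu_2(r_1r_2)=-\delta$, and a single contradiction argument covers every $\delta\ge 1$ at once. The extra $\nu_2(2)=1$ is exactly what forces the paper into casework. Your Newton polygon remark is the same fact in standard form.

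Your argument also never uses $p=2$. It only needs $\nu_p(u)=0$ and $\nu_p(v)=-\delta$, so it proves Lemma~\ref{lem:odd-p-roots} and Corollary~\ref{cor:r2-valuation} uniformly for all $p\mid d$. The paper's route mainly keeps continuity with the quadratic-formula computation used for odd $p$; it gives nothing your argument lacks.
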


\begin{proof}
This proof is done using heavy casework. Start by writing $s:=u+\sqrt{\Delta}$ and $t:=u-\sqrt{\Delta}$. Note the identities:
\[
s+t=2u,\qquad s-t=2\sqrt{\Delta},\qquad st=u^2-\Delta=\left(\frac{a}{b}\right)^2-\left(\frac{a}{b}\right)^2-4\frac{c}{d}=-4v.
\]
Since $\nu_2(u)=0$ and $\nu_2(v)=-\delta$, we have $\nu_2(st)=\nu_2(-4v)=2-\delta$.
Now, we break down the proof into three cases depending on the value of $\delta$.

\emph{Case 1:  $\delta\ge3$.} Here we have $\nu_2(v)\leq -3$.   Thus
\[
\nu_2(\Delta)\geq  \min(2\nu_2(u),2\nu_2(2)+\nu_2(v))=\min(0,2-\delta)=2-\delta.
\]
We know that \(0\neq 2-\delta\) and \(2-\delta<0\), because $\delta\geq3$. From the identities above, we realize that \(\nu_2(\Delta)=2-\delta\). Thus
\[
\nu_2(r_{1,2})=\min\!\left(\nu_2(u),1-\frac{\delta}{2}\right)-1.
\]
Since \(\nu_2(u)\neq 1-\frac{\delta}{2}\), this shows that $\nu_2(r_{1,2})=1-\frac{\delta}{2}-1=-\frac{\delta}2$.

\emph{Case 2: $\delta=2$.} In this case \(\nu_2(st)=\nu_2(4v)=2-\delta=0\) and \(\nu_2(s+t)=\nu_2(2u)=1\). Due to the ultrametric inequality, if $\nu_2(s)\neq\nu_2(t)$, then
\[
\nu_2(s+t)=\min(\nu_2(s),\nu_2(t))\le0,
\]
this, however, contradicts with the fact that $\nu_2(s+t)=\nu_2(2u)=1$. Thus $\nu_2(s)=\nu_2(t)$. Set $\nu_2(s)=\nu_2(t)=\lambda$. Since $2\lambda=\nu_2(st)=0$, we have $\lambda=0$. Thus
\[
\nu_2(r_{1,2})=\lambda-\nu_2(2)=0-1=-1=-\delta/2.
\]

\emph{Case 3:  $\delta=1$.} Now $\nu_2(st)=\nu_2(4v)=2-\delta=1$, $\nu_2(s+t)=\nu_2(2u)=1$, and $\nu_2(s-t)=\nu_2(2\sqrt{\Delta})=1+\nu_2(\sqrt{\Delta})$.
If $\nu_2(s)\neq\nu_2(t)$, then
\[
\nu_2(s+t)=\min(\nu_2(s),\nu_2(t))=1.
\]
This forces $\nu_2(s)+\nu_2(t)\ge2$ which contradicts the fact that $\nu_2(st)=1$.
Thus $\nu_2(s)=\nu_2(t)=\lambda$, so $2\lambda=\nu_2(st)=1$ and $\lambda=\tfrac12$. Thus
\[
\nu_2(r_{1})=\nu_2(r_{2})=\lambda-\nu_2(2)=\tfrac12-1=-\tfrac12=-\delta/2.
\]
Since the three cases cover all possible values of $\delta$, this proves that for $p=2$, $\nu_2(r_1)=\nu_2(r_2)=-\frac{\delta}{2}$.
\end{proof}

\begin{corollary}\label{cor:alpha-unit}
Let $p\mid d$. Then $\nu_p(\Lambda)=0$.
\end{corollary}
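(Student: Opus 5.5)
The plan is to read the corollary off from Lemmas~\ref{lem:odd-p-roots} and~\ref{lem:dyadic-roots}. Together they cover every prime $p\mid d$: the first treats $p\ge3$ and the second treats $p=2$. In both cases they give
\[
\nu_p(r_1)=\nu_p(r_2)=-\frac{\delta}{2},\qquad \delta=\nu_p(d)\ge1,
\]
where the valuation is the extended one from Definition~\ref{def:vp-extension}, computed through the fixed embedding $\iota_p$.

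Before taking the quotient $\Lambda=r_1/r_2$, I need $r_2\neq0$. The constant term of the characteristic polynomial is $-c/d$, so $r_1r_2=-c/d$. The computed valuations are finite, so this product is nonzero; also $c\neq0$, since otherwise $\nu_p(v)=-\delta$ would fail. Hence both roots are nonzero.

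The extended valuation on $L_p$ is multiplicative, and $\iota_p$ is a field embedding, so $\nu_p(\iota_p(r_1/r_2))=\nu_p(\iota_p r_1)-\nu_p(\iota_p r_2)$. Therefore
\[
\nu_p(\Lambda)=\nu_p(r_1)-\nu_p(r_2)=-\frac{\delta}{2}+\frac{\delta}{2}=0.
\]

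The only point needing care is that the two root lemmas must be applied with the same embedding that defines $\nu_p$ on $K_0$. Their proofs are symmetric in the labeling of the roots: they never distinguish $u+\sqrt{\Delta}$ from $u-\sqrt{\Delta}$ beyond the sign choice. So the conclusion holds for whichever root $\iota_p$ sends to which conjugate. There is no genuine obstacle; this is a direct subtraction of valuations.
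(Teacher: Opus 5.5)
Your proposal is correct and matches the paper's proof. The paper likewise combines Lemmas~\ref{lem:odd-p-roots} and~\ref{lem:dyadic-roots} to get $\nu_p(r_1)=\nu_p(r_2)$ for $p\mid d$ and subtracts valuations. Your extra checks, that $r_2\neq 0$ and that the embedding is used consistently, are harmless details the paper leaves implicit; $r_2\neq0$ follows most directly from $c\neq0$, which is forced by $\gcd(c,d)=1$ with $d\ge2$.
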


\begin{proof}
By Lemmas~\ref{lem:odd-p-roots} and~\ref{lem:dyadic-roots}, for all primes $p$ we have $\nu_p(r_1)=\nu_p(r_2)$, hence $\nu_p(\Lambda)=\nu_p(r_1/r_2)=0$.
\end{proof}

\begin{corollary}\label{cor:r2-valuation}
Let $p\mid d$ and set $\delta:=\nu_p(d)\ge 1$. Then $\nu_p(r_2)=-\delta/2$.
\end{corollary}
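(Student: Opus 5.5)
This corollary is read off directly from the two preceding lemmas by splitting on the parity of $p$. Fix a prime $p\mid d$ and put $\delta:=\nu_p(d)\ge1$. All valuations here are taken through the embedding $\iota_p$ of Definition~\ref{def:vp-extension}. Lemmas~\ref{lem:odd-p-roots} and~\ref{lem:dyadic-roots} each give $\nu_p(r_1)=\nu_p(r_2)=-\delta/2$, so in particular they give the value of $\nu_p(r_2)$.

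\emph{Case $p\ge3$.} Lemma~\ref{lem:odd-p-roots} applies verbatim and gives $\nu_p(r_2)=-\delta/2$.

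\emph{Case $p=2$.} Lemma~\ref{lem:dyadic-roots} applies and gives the same conclusion.

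Every prime is either odd or equal to $2$, so the two cases are exhaustive.

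One point needs a remark: the labeling of $r_1$ and $r_2$. They are the two roots $\frac{u\pm\sqrt{\Delta}}{2}$, and $r_1\neq r_2$ by Lemma~\ref{lem:distinct-roots}. Both lemmas assert the valuation for \emph{both} roots, so it does not matter which root is called $r_2$, nor which square root of $\Delta$ the embedding $\iota_p$ selects.

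I do not expect a genuine obstacle here. The only thing to check is that the hypotheses of the two lemmas are met. They require $\nu_p(u)=0$ and $\nu_p(v)=-\delta$, and these follow from $\gcd(a,d)=\gcd(b,d)=\gcd(c,d)=1$, as recorded just before Lemma~\ref{lem:distinct-roots}. Once that is confirmed, the corollary is a restatement of those lemmas, in the same way as Corollary~\ref{cor:alpha-unit}.
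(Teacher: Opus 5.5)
Your proof is correct and matches the paper's: the corollary follows by citing Lemma~\ref{lem:odd-p-roots} for $p\ge 3$ and Lemma~\ref{lem:dyadic-roots} for $p=2$. Your added remark, that both lemmas cover both roots so the labeling of $r_2$ does not matter, is a sound extra check.
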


\begin{proof}
If $p\ge 3$ this is Lemma~\ref{lem:odd-p-roots}, and if $p=2$ this is Lemma~\ref{lem:dyadic-roots}.
\end{proof}
\begin{lemma}\label{lem:one-coefficient-zero}
If $\alpha\beta=0$, for every prime \(p\mid d\) with \(\delta=\nu_p(d)\), we have
\[
\nu_p(S_n)=\nu_p(\gamma)-\frac{\delta}{2}n,
\]
where \(\gamma=\alpha\) when \(\beta=0\), and where \(\gamma=\beta\) in the case \(\alpha=0\). Therefore, for each such $p$ there is an explicit cutoff $k_p$ such that $\nu_p(S_n)<0$ for all $n\ge k_p$. This finally shows that $S_n\notin\Z$ for all $n\ge k_p$.
\end{lemma}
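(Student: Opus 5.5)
If $\alpha\beta=0$, the Binet form in Definition~\ref{def:binet-lucas} has only one term, so $S_n=\gamma r_i^{\,n}$ for a single root $r_i$, where $\gamma$ is the surviving coefficient and $r_i$ is the matching root ($r_1$ if $\beta=0$, $r_2$ if $\alpha=0$). We check first that $\gamma\neq0$. If $\alpha=\beta=0$, then $S_n=0$ for every $n$, which forces $S_0=S_1=0$ and contradicts $\gcd(S_0,S_1)=1$. Hence exactly one of $\alpha,\beta$ vanishes and $\gamma\neq0$. Note that $r_1\neq r_2$ by Lemma~\ref{lem:distinct-roots}, so the Binet representation is valid.

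Next we take valuations. Fix a prime $p\mid d$ and use the embedding of Definition~\ref{def:vp-extension}. The valuation is multiplicative, so $\nu_p(S_n)=\nu_p(\gamma)+n\,\nu_p(r_i)$. Lemmas~\ref{lem:odd-p-roots} and~\ref{lem:dyadic-roots} give $\nu_p(r_1)=\nu_p(r_2)=-\delta/2$, so either choice of root yields
\[
\nu_p(S_n)=\nu_p(\gamma)-\frac{\delta}{2}\,n .
\]
This is exactly the displayed formula. Since $\gamma\neq0$, the quantity $\nu_p(\gamma)$ is a finite rational number.

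For the cutoff, the right-hand side is strictly decreasing in $n$ because $\delta\ge1$. We take
\[
k_p:=\max\!\left(0,\left\lfloor \frac{2\nu_p(\gamma)}{\delta}\right\rfloor+1\right).
\]
Then $n\ge k_p$ gives $n>2\nu_p(\gamma)/\delta$, and so $\nu_p(S_n)<0$. An integer has nonnegative $p$-adic valuation, so $S_n\notin\Z$ for all $n\ge k_p$.

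To make $k_p$ explicit one can compute $\nu_p(\gamma)$ from $\gamma=(S_1-r_2S_0)/(r_1-r_2)$ (or the analogous formula for $\beta$). We have $\nu_p(r_1-r_2)=\nu_p(\sqrt\Delta)$, which is known from the proofs of those lemmas, and $\nu_p(S_1-r_jS_0)$ is bounded below by $\min(\nu_p(S_1),\nu_p(S_0)-\delta/2)$. Explicitness is not needed for the conclusion, though.

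The main obstacle is justifying that $\gamma\neq0$, i.e.\ ruling out the degenerate case $\alpha=\beta=0$, and confirming that $\nu_p(\gamma)$ is finite. Coprimality of $S_0,S_1$ settles this.
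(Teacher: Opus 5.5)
Your argument is correct and is essentially the paper's proof. You write $S_n=\gamma r_i^{\,n}$, use $\nu_p(r_1)=\nu_p(r_2)=-\delta/2$ from Lemmas~\ref{lem:odd-p-roots} and~\ref{lem:dyadic-roots}, and take $k_p$ just past $2\nu_p(\gamma)/\delta$; your check that $\gamma\neq 0$ via $\gcd(S_0,S_1)=1$ fills a point the paper leaves implicit. For explicitness, setting $n=0$ in $S_n=\gamma r_i^{\,n}$ gives $\gamma=S_0$, so $\nu_p(\gamma)=\nu_p(S_0)$. By contrast, your closing remark bounds $\nu_p(S_1-r_jS_0)$ from below, which is the wrong direction for bounding $k_p$, though as you say it is not needed.
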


\begin{proof}
If \(\beta=0\), then \(S_n=\alpha r_1^n\). If \(\alpha=0\), then \(S_n=\beta r_2^n\).
Lemmas~\ref{lem:odd-p-roots} and~\ref{lem:dyadic-roots} both show that for each prime \(p\mid d\), $\nu_p(r_1)=\nu_p(r_2)=-\frac{\delta}{2}.$ So, in either case
\[
\nu_p(S_n)=\nu_p(\gamma)+n\left(-\frac{\delta}{2}\right)
=\nu_p(\gamma)-\frac{\delta}{2}n.
\]
Thus, once $n$ is greater than $\frac{2\nu_p(\gamma)}{\delta}$, we have $\nu_p(S_n)<0$.
\end{proof}

\begin{lemma}\label{lem:Lambda-not-rootunity}
 Assume $b,d\ge 2$, $\gcd(a,b)=\gcd(c,d)=1$, and $\gcd(b,d)=1$. Then $\Lambda=r_1/r_2$ is not a root of unity.
\end{lemma}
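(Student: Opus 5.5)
The plan is to exploit the fact that $\Lambda+\Lambda^{-1}$ is a \emph{rational} number, whereas $\zeta+\zeta^{-1}$ for a root of unity $\zeta$ can be rational only in a handful of cases. This turns the statement into a denominator check on $u^2/v=\frac{a^2d}{b^2c}$, which the coprimality hypotheses settle.

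\emph{Step 1: compute $\Lambda+\Lambda^{-1}$.} Write $u=a/b$ and $v=c/d$, so that by Vieta's formulas
\[
r_1+r_2=u,\qquad r_1r_2=-v.
\]
Here $v\neq 0$, since $\gcd(c,d)=1$ with $d\ge2$ forces $c\ne0$. Then
\[
\Lambda+\Lambda^{-1}=\frac{r_1^2+r_2^2}{r_1r_2}=\frac{u^2+2v}{-v}=-\frac{u^2}{v}-2\in\Q .
\]

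\emph{Step 2: restrict the possible values.} Suppose $\Lambda=\zeta$ is a root of unity. Then $\zeta+\zeta^{-1}$ is an algebraic integer, and it is rational by Step~1, so it lies in $\Z$. It also has absolute value at most $2$, so
\[
\zeta+\zeta^{-1}\in\{-2,-1,0,1,2\}.
\]
Equivalently, $u^2/v\in\{0,-1,-2,-3,-4\}$. In particular $u^2/v$ would be an integer. (The value $\zeta+\zeta^{-1}=2$ means $\Lambda=1$, that is $r_1=r_2$, which Lemma~\ref{lem:distinct-roots} already excludes; the argument below covers it uniformly anyway.)

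\emph{Step 3: show $u^2/v$ is not an integer.} We have
\[
\frac{u^2}{v}=\frac{a^2d}{b^2c}.
\]
From $\gcd(a,b)=1$ and $\gcd(b,d)=1$ we get $\gcd(b^2,a^2d)=1$. If $\frac{a^2d}{b^2c}$ were an integer $N$, then $b^2\mid a^2d$ (from $a^2d=Nb^2c$), and therefore $b^2=1$, contradicting $b\ge2$. The value $0$ would require $a=0$, which is already impossible because $\gcd(0,b)=b\ge2$; in any case it is excluded by the same non-integrality.

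The only step needing care is Step~2, namely justifying that $\zeta+\zeta^{-1}\in\Q$ forces an integer in $[-2,2]$. I would cite integrality of $\zeta+\zeta^{-1}$ over $\Z$ together with the fact that a rational algebraic integer lies in $\Z$. Everything else is the short Vieta computation and the coprimality argument of Step~3, which closely mirrors the proof of Lemma~\ref{lem:distinct-roots}. Notably, the hypotheses on $c$ beyond $c\neq0$, and the hypothesis $\gcd(a,d)=1$, are not needed here.
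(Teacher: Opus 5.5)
Your proof is correct and follows essentially the same route as the paper. Both arguments reduce to the identity $\Lambda+\Lambda^{-1}+2=-u^2/v=-\frac{a^2d}{b^2c}$ and then get a contradiction from $b^2$ dividing $a^2d$ (the paper phrases this as $b^2\mid d$). The main difference is that you restrict $\Lambda+\Lambda^{-1}$ to $\{-2,\dots,2\}$ via algebraic integrality and $|\zeta+\zeta^{-1}|\le 2$, which handles $\Lambda=\pm1$ uniformly. The paper instead uses $\varphi(n)\le 2$ to limit the order to $\{1,2,3,4,6\}$ and treats $\Lambda=\pm1$ separately.
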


\begin{proof}
Proceed by contradiction and assume that $\Lambda$ is a root of unity. Since
$\Lambda\in \Q(\sqrt{\Delta})$, the field $\Q(\Lambda)$ has degree at most $2$
over $\Q$. So, if $n$ denotes the order of $\Lambda$, then $\varphi(n)\le 2$, so $n\in\{1,2,3,4,6\}.$ If $n=1$, then $\Lambda=1$, so $r_1=r_2$, which contradicts
Lemma~\ref{lem:distinct-roots}. Since $\Lambda=r_1/r_2$, we have $r_1=\Lambda r_2$. By Vieta's formulas, $r_1+r_2=\frac ab,\space r_1r_2=-\frac cd$. Thus $(\Lambda+1)r_2=\frac ab$.

If $\Lambda=-1$, then $a/b=0$, so $a=0$. But $\gcd(a,b)=1$ would then force
$b=1$, contradicting $b\ge 2$. Hence, for the remaining cases,
$\Lambda+1\ne 0$, and $r_2=\frac{a}{b(\Lambda+1)}$, we can substitute them into $r_1r_2=-c/d$. After doing so we get
\[
-\frac cd=\Lambda r_2^2
=
\Lambda\frac{a^2}{b^2(\Lambda+1)^2}.
\]
Therefore
\[
\frac cd
=
-\frac{\Lambda}{(\Lambda+1)^2}\cdot\frac{a^2}{b^2}.
\]
Since
\[
\frac{\Lambda}{(\Lambda+1)^2}
=
\frac{1}{\Lambda+\Lambda^{-1}+2},
\]
and for primitive roots of unity of orders $3,4,6$ we know that $\Lambda+\Lambda^{-1}$ is either $1,0\text{ or }-1$, it is thus clear that $\Lambda+\Lambda^{-1}+2\in\{1,2,3\}$. This shows us that
\[
\frac cd=-\frac{a^2}{t b^2}
\]
for some $t\in\{1,2,3\}$. Now we can reduce the fraction $-a^2/(t b^2)$ into its lowest terms. Since $a$ and $b$ are relatively prime, no prime divisor of $b$ divides $a$. Therefore no prime power coming from $b^2$ can cancel with the numerator $a^2$, even if some prime divisor of $b$ also divides $t$. Thus the reduced denominator is still divisible by $b^2$. But $c/d$ is already in lowest terms, so $d$ must be divisible by $b^2$. This contradicts $\gcd(b,d)=1$ and $b\ge 2$. Therefore $\Lambda$ is not a root of unity.
\end{proof}

\begin{lemma}\label{lem:Gamma-split}
Fix a prime $p\mid d$.

\smallskip
\noindent (i) If $\nu_p(\Gamma^{-1})\neq 0$, then for every $n\ge 1$ we have
\[
\nu_p(\Lambda^n+\Gamma^{-1})
\le 0.
\]

\smallskip
\noindent (ii) If $\nu_p(\Gamma^{-1})=0$, then $\Lambda$ and $-\Gamma^{-1}$ have $p$-adic valuation $0$.
\end{lemma}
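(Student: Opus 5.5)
The plan relies on two facts: Corollary~\ref{cor:alpha-unit}, which gives $\nu_p(\Lambda)=0$ for every prime $p\mid d$, and the ultrametric property of $\nu_p$ recalled in the introduction (extended to $L_p$ through the embedding of Definition~\ref{def:vp-extension}). Since $\nu_p(\Lambda)=0$, multiplicativity of the valuation gives $\nu_p(\Lambda^n)=n\,\nu_p(\Lambda)=0$ for every $n\ge 1$. Both parts then come from comparing $\nu_p(\Lambda^n)=0$ with $\nu_p(\Gamma^{-1})$.

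For part (i), assume $\nu_p(\Gamma^{-1})\neq 0$. Then the two summands $\Lambda^n$ and $\Gamma^{-1}$ have unequal valuations, so the ultrametric inequality holds with equality:
\[
\nu_p(\Lambda^n+\Gamma^{-1})=\min\bigl(0,\nu_p(\Gamma^{-1})\bigr)\le 0 .
\]
In particular $\Lambda^n+\Gamma^{-1}\neq 0$, because a sum with finite valuation is nonzero. This is compatible with the hypothesis of Lemma~\ref{boundthing}.

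For part (ii), assume $\nu_p(\Gamma^{-1})=0$. Then $\nu_p(-\Gamma^{-1})=\nu_p(-1)+\nu_p(\Gamma^{-1})=0$, since $-1$ is a unit. The claim $\nu_p(\Lambda)=0$ is exactly Corollary~\ref{cor:alpha-unit}. So both $\Lambda$ and $-\Gamma^{-1}$ are units in the valuation ring of $L_p$. This is the setting in which a bound on $\nu_p(\Lambda^n-(-\Gamma^{-1}))$ of Chim's type can later be applied.

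There is no real obstacle. The only point to check is that the valuations are taken consistently through the fixed embedding $\iota_p$, and that $\Gamma$ is well defined and nonzero. The latter holds because this lemma is used in the setting of Lemma~\ref{boundthing}, where $\alpha\beta\neq 0$.
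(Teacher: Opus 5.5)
Your proposal is correct and follows the paper's proof: Corollary~\ref{cor:alpha-unit} gives $\nu_p(\Lambda^n)=0$, so when $\nu_p(\Gamma^{-1})\neq 0$ the ultrametric inequality holds with equality, giving $\nu_p(\Lambda^n+\Gamma^{-1})=\min(0,\nu_p(\Gamma^{-1}))\le 0$, and (ii) is immediate. Your extra remarks are harmless refinements that the paper leaves implicit: that $\nu_p(-1)=0$, that the sum is then nonzero, and that $\alpha\beta\neq 0$ is needed for $\Gamma^{\pm 1}$ to be defined.
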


\begin{proof}
We know from Corollary~\ref{cor:alpha-unit} that $\nu_p(\Lambda)=0$, hence $\nu_p(\Lambda^n)=0$ for all $n\ge 1$. If $\nu_p(\Gamma^{-1})\neq 0$, then $\nu_p(\Lambda^n)\neq \nu_p(\Gamma^{-1})$, so the ultrametric inequality gives
\[
\nu_p(\Lambda^n+\Gamma^{-1})=\min(\nu_p(\Lambda^n),\nu_p(\Gamma^{-1}))
=\min(0,\nu_p(\Gamma^{-1}))\le 0,
\]
proving (i). If $\nu_p(\Gamma^{-1})=0$, (ii) is immediate from $\nu_p(\Lambda)=0$.
\end{proof}
\begin{lemma}\label{lem:chim-log-cor}
Start by letting \(\overline{\Q}\) be an algebraic closure of \(\Q\), now fix some embedding
\(\overline{\Q}\hookrightarrow \overline{\Q}_p\).
Define \(\alpha_1,\alpha_2\in \overline{\Q}\), also let them satisfy the property that
\[
\nu_p(\alpha_1)=\nu_p(\alpha_2)=0.
\]
Furthermore, we assume that \(\alpha_1\) and \(\alpha_2\) are multiplicatively independent. Finally, we fix a positive integer \(B_0\). Then there must exist some explicit constants \(A,B>0\), depending only on \(\alpha_1,\alpha_2,B_0\), and \(p\), such that for every integer \(m\ge 1\) with $\alpha_1^m-\alpha_2^{B_0}\neq 0$, we know that
\[
\nu_p(\alpha_1^m-\alpha_2^{B_0})\le A\log m+B.
\]
\end{lemma}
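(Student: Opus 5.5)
This lemma is a specialization of a lower bound for linear forms in two $p$-adic logarithms, of Bugeaud--Laurent type, which the paper attributes to Chim~\cite{Chim2025}. I would derive it from that bound rather than reprove it. The plan is to express $\nu_p(\alpha_1^m-\alpha_2^{B_0})$ as the $p$-adic valuation of a nonvanishing linear form and then read off the logarithmic dependence on $m$.

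\emph{Setup.} Since $\nu_p(\alpha_1)=\nu_p(\alpha_2)=0$, both numbers are units in the ring of integers of a finite extension $L/\Q_p$ containing them. I write
\[
\alpha_1^m-\alpha_2^{B_0}=\alpha_2^{B_0}\bigl(\alpha_1^m\alpha_2^{-B_0}-1\bigr),
\]
so that $\nu_p(\alpha_1^m-\alpha_2^{B_0})=\nu_p(\alpha_1^m\alpha_2^{-B_0}-1)$. The hypothesis $\alpha_1^m\ne\alpha_2^{B_0}$ guarantees this is finite. Multiplicative independence guarantees that $\alpha_1^m\alpha_2^{-B_0}\neq 1$ for every nonzero pair of exponents, which is exactly the nondegeneracy hypothesis required by the linear-forms estimate.

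\emph{Applying the estimate.} The Bugeaud--Laurent-type bound, in the form quoted from~\cite{Chim2025}, states that for multiplicatively independent $p$-adic units $\alpha_1,\alpha_2$ and positive integers $b_1,b_2$,
\[
\nu_p\bigl(\alpha_1^{b_1}\alpha_2^{-b_2}-1\bigr)\le C\,\max\{\log b',\,c_0\}^2,
\qquad b'=\frac{b_1}{D\log A_2}+\frac{b_2}{D\log A_1}.
\]
Here $C$ and $c_0$ depend only on $p$, on the degree $D$ of the field generated by $\alpha_1,\alpha_2$, and on the heights $\log A_i$ of the $\alpha_i$. I take $b_1=m$ and $b_2=B_0$. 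Then $b'\le c_1 m$ with $c_1$ depending only on $\alpha_1,\alpha_2,B_0$. This gives
\[
\nu_p(\alpha_1^m-\alpha_2^{B_0})\ll(\log m)^2.
\]

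\emph{Main obstacle.} The stated claim is linear in $\log m$, while the generic two-logarithm bound is quadratic. The extra saving has to come from the fact that $b_2=B_0$ is fixed. For this I would use the refinement in which one exponent is bounded, which is the version Chim records. Alternatively, I would argue directly as follows. Set $\xi=\alpha_2^{B_0}$ and let $\ell$ be the order of $\alpha_1$ modulo the maximal ideal, so that $\alpha_1^{\ell}\equiv1$. If $\nu_p(\alpha_1^m-\xi)>0$, then $m$ lies in a fixed residue class $m\equiv m_0\pmod{\ell}$.

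Writing $m=m_0+\ell j$, I compare $\alpha_1^{\ell j}$ with $\xi\alpha_1^{-m_0}$ using the $p$-adic logarithm and exponential on a small enough neighborhood of $1$ (after raising to a further power $p^{s}$ to land in their convergence disc). This gives
\[
\nu_p(\alpha_1^m-\xi)=\nu_p\bigl(j\log_p\alpha_1^{\ell}-\log_p(\xi\alpha_1^{-m_0})\bigr)+O(1).
\]
The two logarithms here are fixed $p$-adic numbers. If they are not $\Q_p$-proportional, the right side is bounded. If they are proportional with ratio $\theta\in\Q_p$, the right side is $\nu_p(j-\theta)+O(1)$. In that case $\theta\notin\Z_{\ge0}$, because equality $j=\theta$ would give $\alpha_1^m=\xi$ up to a root of unity, which multiplicative independence excludes.

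What remains is to bound $\nu_p(j-\theta)$ by $\log j/\log p+O(1)$, and I expect this to be the real difficulty. It is immediate when $\theta$ is rational, since $j-\theta$ is then a rational number with bounded denominator. For general $p$-adic $\theta$ it is precisely the content of the Baker-type input, so I would fall back on citing~\cite{Chim2025} for it. That yields constants $A,B$ depending only on $\alpha_1,\alpha_2,B_0,p$.
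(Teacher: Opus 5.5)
Your proof ends up resting on the same single input as the paper's. You apply Chim's two-logarithm bound with $b_1=m$, $b_2=B_0$, and use that $B_0$ is fixed to get $b'\le c_1m$. The difficulty is that the only bound you actually write down has the Bugeaud--Laurent shape $C\max\{\log b',c_0\}^2$, which gives $(\log m)^2$. The passage to $\log m$ is left to an unspecified ``refinement in which one exponent is bounded,'' so the decisive step is missing as written. The paper closes it by taking Chim's Theorem~2.1 in the form $\nu_p(\alpha_1^m-\alpha_2^{B_0})\le C_0\log\bigl(mD\log A_2+B_0D\log A_1\bigr)+C_1$, which is linear in the logarithm of the exponents. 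With that form, your estimate $b'\le c_1m$ finishes the proof in one line. If the reference only gives the quadratic shape, Yu's $p$-adic theorem supplies the linear one. Your diagnosis of where the saving comes from is also off. Linear dependence on $\log$ of the exponents is the general shape of Yu-type $p$-adic Baker bounds and holds for arbitrary exponents. Fixing $B_0$ only lets you absorb it into the constants; it is not what improves $(\log m)^2$ to $\log m$.

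Your direct alternative does not supply the missing step either. The ``immediate'' rational case is vacuous. Put $\gamma=\alpha_2^{B_0}\alpha_1^{-m_0}$ and $\lambda=\alpha_1^{\ell}$, and suppose $\theta=u/v$ with $v>0$. Then $\log(\gamma^v\lambda^{-u})=0$, so $\gamma^v\lambda^{-u}$ is a root of unity, say of order $w$. This gives $\alpha_2^{B_0vw}\alpha_1^{-(m_0v+\ell u)w}=1$ with $B_0vw\neq0$, contradicting multiplicative independence. So all the content lies in the case $\theta\in\Z_p\setminus\Q$. There, no bound $\nu_p(j-\theta)\le\log j/\log p+O(1)$ holds for general $p$-adic numbers: take $\theta$ with extremely sparse nonzero digits and let $j$ be its truncations. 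Only the Baker--Yu input, which uses that $\theta$ is a ratio of $p$-adic logarithms of algebraic numbers, rules this out. So you should drop the detour and cite the linear-in-$\log$ bound directly, as the paper does.
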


\begin{proof}
First let
\[
D=[\Q(\alpha_1,\alpha_2):\Q],
\]
next choose real numbers \(A_1,A_2>1\) such that
\[
\log A_i\ge \max\!\left\{h(\alpha_i),\frac{\log p}{D}\right\}
\qquad (i=1,2),
\]
where \(h(\alpha_i)\) symbolizes the absolute logarithmic Weil height of some number $\alpha_i$. Remember that the only things these choices depend on are \(\alpha_1,\alpha_2\), and \(p\). Now we can apply Chim's theorem \cite[Theorem~2.1]{Chim2025} to the quantity
\[
\alpha_1^m-\alpha_2^{B_0},
\]
with exponents \(b_1=m\) and \(b_2=B_0\). Since \(\alpha_1,\alpha_2\) are algebraic,
\(p\)-adic units, and multiplicatively independent, the hypotheses of the theorem
are satisfied. Since these hypotheses are satisfied, we are given explicit constants \(C_0,C_1>0\), depending only on
\(\alpha_1,\alpha_2,B_0\), and \(p\), such that the inequality always holds:
\[
\nu_p(\alpha_1^m-\alpha_2^{B_0})
\le C_0 \log\!\bigl(mD\log A_2+B_0D\log A_1\bigr)+C_1
\]
for every integer \(m\ge 1\) with \(\alpha_1^m-\alpha_2^{B_0}\neq 0\). Since \(B_0\) always remains constant, there exists a constant, called \(C_2>0\), depending only on
\(\alpha_1,\alpha_2,B_0\), and \(p\), where:
\[
mD\log A_2+B_0D\log A_1 \le C_2 m
\qquad (m\ge 1).
\]
Due to the above inequality, we can use a little bit of grouping to calculate explicit values for $A$ and $B$
\[
\nu_p(\alpha_1^m-\alpha_2^{B_0})
\le C_0\log(C_2m)+C_1
= C_0\log m + \bigl(C_0\log C_2+C_1\bigr).
\]
Specifically, we find that the claim will hold with
\[
A:=C_0,
\qquad
B:=C_0\log C_2+C_1.\qedhere
\]
\end{proof}

\begin{lemma}\label{lem:unit-minus-rootofunity}
Let $\Q_p$ be the field of $p$-adic numbers, $L/\Q_p$ be a finite field extension, with $\nu_p$ extended as in Definition~\ref{def:vp-extension}, and $\mathcal O_L := \{x \in L : \nu_p(x) \ge 0\}$ denote the ring of integers of $L$. For some integer $c\ge 1$, we write
\[
\mathfrak P_{\ge c}:=\{x\in L:\nu_p(x)\ge c\},\qquad
1+\mathfrak P_{\ge c}:=\{x\in \mathcal O_L^\times:x\equiv 1\pmod{\mathfrak P_{\ge c}}\}.
\]
After doing that, let $\theta \in L^\times$ be a $p$-adic unit that is not a root of unity. Then there must exist constants $A,B$, depending only on $L$ and $\theta$, such that for every integer $m \ge 1$ with $\theta^m \neq 1$,
\[
\nu_p(\theta^m - 1) \le A \log m + B.
\]
\end{lemma}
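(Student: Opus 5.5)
The plan is to prove this directly with the $p$-adic logarithm rather than through Lemma~\ref{lem:chim-log-cor}. The latter needs a second multiplicatively independent element, and none is naturally available here. The standard route is to first move $\theta$ into a small neighbourhood of $1$ by raising it to a fixed power, and then linearize using $\log_p$.

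\textbf{Step 1 (reduction to a principal unit close to $1$).} The residue field of $\mathcal O_L$ is finite of size $q$. Since $\theta\in\mathcal O_L^\times$, we have $\theta^{q-1}\in 1+\mathfrak P_{>0}$. Let $e$ be the ramification index, so the valuations in $L$ lie in $\frac1e\Z$. Choose $c_0>\frac{1}{p-1}$, for example $c_0=1$ or $c_0=2$, a value where $\log_p$ and $\exp_p$ are mutually inverse isometries between $\mathfrak P_{\ge c_0}$ and $1+\mathfrak P_{\ge c_0}$. Raising a principal unit to the $p$-th power increases $\nu_p(x-1)$ until it exceeds $\frac1{p-1}$ and thereafter adds $1$ each time. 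Hence there is a fixed integer $f=(q-1)p^{s}$, depending only on $L$, such that $\eta:=\theta^{f}\in 1+\mathfrak P_{\ge c_0}$. Since $\theta$ is not a root of unity, $\eta\neq1$, so $\lambda:=\log_p\eta\neq0$. This is because $\log_p$ is injective on $1+\mathfrak P_{\ge c_0}$. Set $\nu_p(\lambda)=\nu_p(\eta-1)=:\kappa$, which is finite and depends only on $\theta$ and $L$.

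\textbf{Step 2 (reducing general $m$ to multiples of $f$).} Take $m\ge1$ with $\theta^m\ne1$.
\begin{itemize}
\item If $\nu_p(\theta^m-1)\le 0$, there is nothing to prove.
\item If $0<\nu_p(\theta^m-1)<c_0$, the desired bound holds with $B\ge c_0$.
\item Otherwise $\theta^m\in 1+\mathfrak P_{\ge c_0}$. Then $\nu_p(\theta^{mf}-1)=\nu_p(\theta^m-1)+\nu_p(f)$, because $\log_p(x^f)=f\log_p x$ and $\log_p$ is an isometry on $1+\mathfrak P_{\ge c_0}$. It follows that $\nu_p(\theta^m-1)\le\nu_p(\theta^{mf}-1)$.
\end{itemize}
So it suffices to bound $\nu_p(\eta^m-1)$. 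Since $\eta\in1+\mathfrak P_{\ge c_0}$, the isometry property gives
\[
\nu_p(\eta^m-1)=\nu_p(\log_p\eta^m)=\nu_p(m\lambda)=\nu_p(m)+\kappa\le \frac{\log m}{\log p}+\kappa .
\]
The conclusion follows with $A=1/\log p$ and $B=\max(c_0,\kappa)$, both depending only on $L$ and $\theta$.

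\textbf{Main obstacle.} The delicate point is justifying the isometry $\nu_p(\log_p x)=\nu_p(x-1)$ on $1+\mathfrak P_{\ge c_0}$, together with injectivity there, in a possibly ramified extension $L$. I would handle this from the series: for $\nu_p(x-1)=t>\frac1{p-1}$, each term $(x-1)^k/k$ with $k\ge2$ has valuation $kt-\nu_p(k)>t$, which follows from $\nu_p(k)\le\frac{k-1}{p-1}$. Hence the first term dominates. The same estimate shows that $x\mapsto x^{p}$ raises $\nu_p(x-1)$ by exactly $1$ in this range, which was used in Step 1.

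The only other care needed is that $\theta^m\neq1$ is used to exclude degenerate cases. Nonvanishing of $\eta^m-1$ is automatic because $\eta$ is not a root of unity.
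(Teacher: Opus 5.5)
Your proof is correct and is essentially the paper's. Both raise $\theta$ to a fixed power ($f$ for you, $M$ in the paper) to land in $1+\mathfrak P_{\ge c}$, then use that $\log$ is an injective isometric homomorphism there to get $\nu_p(\theta^m-1)$ equal to $\nu_p(m)$ plus a constant. Your split on whether $\theta^m\in 1+\mathfrak P_{\ge c_0}$ is equivalent to the paper's split on $\theta^{r}$ after writing $m=Mq+r$, because $\theta^{Mq}$ already lies in that subgroup; your version just avoids the division algorithm and the per-residue constants.
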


\begin{proof}
The general idea of this proof is to use the property that in the $p$-adic number field, $\nu_p(\log(1+x))=\nu_p(x)$. Start by choosing an integer $c\ge1$ large enough so that the series
\[
\log(1+x)=\sum_{n\ge1}\frac{(-1)^{n+1}x^n}{n}
\]
converges for all $x\in\mathfrak P_{\ge c}$. For instance, one can take any integer $c>1/(p-1)$. This is because for such $c$, the first term in the logarithm, namely $x$, has a smaller $p$-adic valuation than all later terms, and therefore, from the ultrametric inequality:
\[
\nu_p(\log(1+x))=\nu_p(x)
\]
for all $x\in\mathfrak P_{\ge c}$. We can also realize that since $\log(1+x)=0$ implies that $x=0$, the logarithm is injective on $1+\mathfrak P_{\ge c}$. Furthermore, we know the $p$-adic logarithm also satisfies $\log(uv)=\log(u)+\log(v)$ whenever $u,v\in 1+\mathfrak P_{\ge c}$, thus we have the injective group homomorphism
\[
\log:1+\mathfrak P_{\ge c}\longrightarrow \mathfrak P_{\ge c}.
\]
Because we know the size of $\mathcal O_L^\times/(1+\mathfrak P_{\ge c})$, we can define $M$ to be a positive integer divisible by the order of said group. Then $\theta^M\in 1+\mathfrak P_{\ge c}$.
Since $\theta$ is not a root of unity, $\theta^M$ is not a root of unity either, and since $\log$ is injective on $1+\mathfrak P_{\ge c}$, we have $\log(\theta^M)\neq 0.$ Next we take advantage of the division algorithm and write
\[
m=Mq+r, \qquad 0\le r<M.
\]
If $q=0$, then $m=r$, so $1\le m<M$. Thus there are only finitely many
such integers $m$, and for each of them the value $\nu_p(\theta^m-1)$ is
finite because $\theta^m\neq 1$. This just means that for each of these finitely many initial cases, we can absorb each of their constant $p$-adic valuations into the final bound. Having dealt with the case where $q=0$, we can deal with the cases where $q\ge 1$. First deduce that $\theta^m-1=\theta^r\bigl((\theta^M)^q-\theta^{-r}\bigr)$, so since $\theta$ is a $p$-adic unit,
\[
\nu_p(\theta^m-1)=\nu_p\bigl((\theta^M)^q-\theta^{-r}\bigr).
\]
This allows us to solve the problem by dealing with two cases.

\medskip
\noindent\textbf{Case 1: $\theta^{-r}\notin 1+\mathfrak P_{\ge c}$.}
If $\theta^{-r}$ is indeed not one away from a high power of $\mathfrak P$, we can simply take advantage of the ultrametric inequality:
\[
(\theta^M)^q-1\in \mathfrak P_{\ge c},
\]
while $1-\theta^{-r}\notin \mathfrak P_{\ge c}$. Therefore
\[
\nu_p\big((\theta^M)^q-1\big)\ge c,
\qquad
\nu_p(1-\theta^{-r})<c.
\]
The two summands $(\theta^M)^q-1 \text{ and }1-\theta^{-r}$ have unequal $p$-adic valuations. So using the ultrametric equality:
\[
\nu_p\!\left((\theta^M)^q-\theta^{-r}\right)
=
\nu_p(1-\theta^{-r}).
\]
So this case is bounded independently of $q$, consequently making it independent of $m$.

\medskip
\noindent\textbf{Case 2: $\theta^{-r}\in 1+\mathfrak P_{\ge c}$.}
This immediately implies that $\theta^r\in 1+\mathfrak P_{\ge c}$, and since $(\theta^M)^q-\theta^{-r}=\theta^{-r}(\theta^{Mq+r}-1)$, with $\nu_p(\theta^{-r})=0$, we get
\[
\nu_p\bigl((\theta^M)^q-\theta^{-r}\bigr)=\nu_p(\theta^{Mq+r}-1).
\]
Additionally, since $(\theta^M)^q\in 1+\mathfrak P_{\ge c}$ and $\theta^r\in 1+\mathfrak P_{\ge c}$, we have $\theta^{Mq+r}=(\theta^M)^q\theta^r\in 1+\mathfrak P_{\ge c}$. Using that $\log$ is a homomorphism on $1+\mathfrak P_{\ge c}$, we obtain
$\nu_p(\theta^{Mq+r}-1)=\nu_p\bigl(\log(\theta^{Mq+r})\bigr).$ Now, we can neatly write $\nu_p(\theta^{Mq+r}-1)$ as a nice sum of $p$-adic valuations for a few logarithms, namely: $q\log(\theta^M)+\log(\theta^r)$. Next, we find an expression for $\log(\theta^r)$ in terms of $\log(\theta^M)$:
\[
M\log(\theta^r)=\log((\theta^r)^M)=\log(\theta^{Mr})=r\log(\theta^M),
\] 
so $\log(\theta^r)=\frac{r}{M}\log(\theta^M).$ Therefore
\[
\log(\theta^{Mq+r})
=
q\log(\theta^M)+\log(\theta^r)
=
\left(q+\frac{r}{M}\right)\log(\theta^M)
=
\frac{Mq+r}{M}\log(\theta^M).
\]
Realize that because $m=Mq+r$: $\log(\theta^{Mq+r})=\frac{m}{M}\log(\theta^M)$. Thus
\[
\nu_p\bigl((\theta^M)^q-\theta^{-r}\bigr)
=
\nu_p(m)-\nu_p(M)+\nu_p(\log(\theta^M)).
\]
Using a simple upper bound for $p$-adic valuation, clearly $\nu_p(m)\le \log_p m=\frac{\log m}{\log p}$, thus
\[
\nu_p\bigl((\theta^M)^q-\theta^{-r}\bigr)
\le
\frac{\log m}{\log p}+C',
\]
where $C':=-\nu_p(M)+\nu_p(\log(\theta^M))$. Since the theorem requires a bound for all positive integers $m$, not just for a certain residue class modulo $M$, we must handle each residue $r\in\{0,\ldots,M-1\}$. Luckily, for each residue $r$, the preceding argument gives $A_r,B_r$ such that
\[
\nu_p(\theta^m-1)\le A_r\log m+B_r
\]
for every integer $m\ge 1$ with $m\equiv r\pmod M$ and $\theta^m\ne 1$. Because there are only finitely many residues all we must do is set $A:=\max_{0\le r<M}A_r, \quad B:=\max_{0\le r<M}B_r.$ Then
\[
\nu_p(\theta^m-1)\le A\log m+B
\]
for every integer $m\ge 1$ with $\theta^m\ne 1$.
\end{proof}

\begin{lemma}\label{lem:log-upper}
Fix a prime \(p\mid d\), assume \(\alpha\beta\neq 0\) and \(\nu_p(\Gamma^{-1})=0\). Then there must exist explicit constants $C_2,C_3$ (depending only on $a,b,c,d,S_0,S_1$ and $p$) such that for all $n\ge 2$ with
\(\Lambda^n+\Gamma^{-1}\ne 0\),
\[
\nu_p(\Lambda^n+\Gamma^{-1})\le C_2\log n + C_3.
\]
\end{lemma}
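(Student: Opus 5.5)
We want to bound $\nu_p(\Lambda^n+\Gamma^{-1})$ for a fixed prime $p\mid d$ in the setting $\alpha\beta\neq0$, $\nu_p(\Gamma^{-1})=0$. By Lemma~\ref{lem:Gamma-split}(ii) together with Corollary~\ref{cor:alpha-unit}, both $\Lambda$ and $\theta_0:=-\Gamma^{-1}$ are $p$-adic units in the finite extension $L_p$ of Definition~\ref{def:vp-extension}. We rewrite the quantity as
\[
\Lambda^n+\Gamma^{-1}=\Lambda^n-\theta_0,
\]
and split according to whether $\Lambda$ and $\theta_0$ are multiplicatively dependent. Lemma~\ref{lem:Lambda-not-rootunity} says $\Lambda$ is not a root of unity, and this fact is used in every case.

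\emph{Case A: $\Lambda$ and $\theta_0$ are multiplicatively independent.} We apply Lemma~\ref{lem:chim-log-cor} with $\alpha_1=\Lambda$, $\alpha_2=\theta_0$, $B_0=1$ and $m=n$. Both elements are algebraic $p$-adic units and independent, so for every $n\ge1$ with $\Lambda^n\neq\theta_0$ we get $\nu_p(\Lambda^n-\theta_0)\le A\log n+B$. The constants depend only on $\Lambda,\Gamma,p$, and $\Lambda,\Gamma$ are determined by $a,b,c,d,S_0,S_1$. We then set $C_2=A$ and $C_3=B$.

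\emph{Case B: they are dependent,} so there exist integers $(e,f)\neq(0,0)$ with $\Lambda^e=\theta_0^f$.
\begin{itemize}
\item If $f=0$, then $\Lambda$ would be a root of unity, which is impossible; so $f\neq0$ and we may take $f>0$.
\item Set $\theta_0^f=\Lambda^e$ and write
\[
\Lambda^{nf}-\theta_0^f=\prod_{\zeta^f=1}\bigl(\Lambda^n-\zeta\theta_0\bigr).
\]
Every factor is integral in a slightly larger extension, so $\nu_p(\Lambda^n-\theta_0)\le\nu_p(\Lambda^{nf}-\Lambda^e)$.
\item The right-hand side equals $\nu_p(\Lambda^{nf-e}-1)$, since $\Lambda$ is a unit.
\item Lemma~\ref{lem:unit-minus-rootofunity}, applied with $\theta=\Lambda$ and $m=nf-e$, then gives $\nu_p\le A\log|nf-e|+B$ when $nf-e\ge1$. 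For negative exponents we use $\Lambda^{-m}-1=-\Lambda^{-m}(\Lambda^m-1)$, which has the same valuation.
\item Since $|nf-e|\le (f+|e|)n$ for $n\ge1$, the bound becomes $A\log n+A\log(f+|e|)+B$.
\item The case $nf=e$ happens for at most one $n$; that value is absorbed into $C_3$, using $\Lambda^n+\Gamma^{-1}\neq0$ so its valuation is finite.
\item We need the product expansion to be nonzero: if $\Lambda^{nf}=\Lambda^e$ then $nf=e$, which is exactly the exceptional $n$ just handled.
\end{itemize}

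\emph{Main obstacle.} Case B needs the most care. One must pass to an extension of $L_p$ containing the $f$-th roots of unity (enlarge $L_p$ at the start). One must also check that Lemma~\ref{lem:unit-minus-rootofunity} applies to negative exponents, via the identity above. Finally, the constants must stay uniform, which holds because $e$ and $f$ are fixed by $\Lambda,\Gamma$. Taking $C_2,C_3$ as the resulting constants in whichever case occurs (restricting to $n\ge2$ so that $\log n>0$) proves the lemma.
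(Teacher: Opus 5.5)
Your proposal is correct and follows the paper's proof: it uses the same split into the multiplicatively independent case (Lemma~\ref{lem:chim-log-cor} with $B_0=1$) and the dependent case (a product over roots of unity reducing to Lemma~\ref{lem:unit-minus-rootofunity}). The only difference is that you raise $\Lambda^n-\theta_0$ directly to the $f$-th power and apply Lemma~\ref{lem:unit-minus-rootofunity} to $\theta=\Lambda$ itself, whereas the paper adjoins an $s$-th root $\eta$ of $\Lambda$, writes $-\Gamma^{-1}=\xi\eta^r$, and applies the lemma to $\eta^q$; this is a slight streamlining, and you could even drop the roots of unity, since $X^f-Y^f=(X-Y)(X^{f-1}+X^{f-2}Y+\cdots+Y^{f-1})$ with the second factor integral.
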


\begin{proof}
We immediately simplify our notation by defining:
\[
\alpha_1:=\Lambda,\qquad \alpha_2:=-\Gamma^{-1}.
\]
By Corollary~\ref{cor:alpha-unit} and the hypothesis that \(\nu_p(\Gamma^{-1})=0\), it clearly follows that: $\nu_p(\alpha_1)=\nu_p(\alpha_2)=0$. Furthermore, we know that, \(\alpha_1\) and \(\alpha_2\) are algebraic over \(\Q\), since
\[
\alpha_1=\Lambda=\frac{r_1}{r_2},\qquad \alpha_2=-\Gamma^{-1}=-\frac{\beta}{\alpha},
\]
and \(r_1,r_2,\alpha,\beta\) are algebraic. First, let us assume that \(\alpha_1\) and \(\alpha_2\) are multiplicatively independent. Then Lemma~\ref{lem:chim-log-cor} applies to
\[
\alpha_1^n-\alpha_2=\Lambda^n+\Gamma^{-1},
\]
since the exponents \(n\) and \(1\) are positive integers for \(n\ge 2\). Thus, we are done because there exist explicit constants \(C_2,C_3\) such that
\[
\nu_p(\Lambda^n+\Gamma^{-1})\le C_2\log n + C_3
\]
for all \(n\ge 2\). Since we are done with the simpler independent case, we now assume \(\alpha_1\) and \(\alpha_2\) are multiplicatively dependent. From the definition of dependence there exist coprime integers \(r,s\) with \(s>0\) such that $\alpha_1^r=\alpha_2^s$. To make the $\alpha_i$'s be easier to deal with, we let \(L/\Q_p\) be a finite extension containing an \(s\)-th root \(\eta\) of \(\alpha_1\) and an \(s\)-th root of unity \(\xi\) such that $\alpha_1=\eta^s,\space \space\alpha_2=\xi\,\eta^r$. Then
\[
\alpha_1^n-\alpha_2=\eta^{sn}-\xi\eta^r=\eta^r(\eta^{sn-r}-\xi).
\]
Since \(\nu_p(\eta^s)=\nu_p(\alpha_1)=0\), we have \(\nu_p(\eta)=0\), so \(\eta\) is a \(p\)-adic unit. Also, \(\eta\) is not a root of unity, for otherwise \(\alpha_1=\eta^s\) would be a root of unity, which contradicts Lemma~\ref{lem:Lambda-not-rootunity}. Set $m_n:=sn-r$. Then
\[
\nu_p(\Lambda^n+\Gamma^{-1})
=\nu_p(\alpha_1^n-\alpha_2)
=\nu_p(\eta^{m_n}-\xi).
\]
Because \(s>0\), there are only finitely many integers \(n\ge 2\) such that \(m_n\le 0\). Under the assumption \(\Lambda^n+\Gamma^{-1}\ne 0\), the quantities $\nu_p(\Lambda^n+\Gamma^{-1})$ for those finitely many \(n\) are finite; let \(C_0\) be their maximum. Now suppose \(m_n\ge 1\). Let \(q\) be the order of the root of unity \(\xi\). Since \(\eta\) is not a root of unity, \(\eta^q\) is not a root of unity either. Hence $(\eta^q)^{m_n}\ne 1$ for every \(m_n\ge 1\). Also, \(\eta^q\) is a \(p\)-adic unit. After replacing \(L\) by a finite extension if necessary, assume that \(L\) contains all \(q\)-th roots of unity. We compare \(\eta^{m_n}-\xi\) with \(\eta^{qm_n}-1\). Since \(\xi^q=1\), we have
\[
\eta^{qm_n}-1
=
\prod_{\zeta^q=1}(\eta^{m_n}-\zeta)
=
(\eta^{m_n}-\xi)
\prod_{\substack{\zeta^q=1\\ \zeta\neq \xi}}
(\eta^{m_n}-\zeta).
\]
All factors in this product are differences of \(p\)-adic units, so they have nonnegative \(p\)-adic valuation. Therefore
\[
\nu_p(\eta^{m_n}-\xi)
\le
\nu_p(\eta^{qm_n}-1)
=
\nu_p\bigl((\eta^q)^{m_n}-1\bigr).
\]
Applying Lemma~\ref{lem:unit-minus-rootofunity} to \(\theta=\eta^q\), we obtain constants \(A_1,B_1\) such that

\[
\nu_p(\Lambda^n+\Gamma^{-1})\le A_1\log m_n+B_1
\qquad (m_n\ge 1).
\]
Since for all $n$ greater than or equal to one $m_n=sn-r\le sn+|r|\le (s+|r|)n$, we obtain
\[
\nu_p(\Lambda^n+\Gamma^{-1})
\le A_1\log n+\bigl(B_1+A_1\log(s+|r|)\bigr).
\]
Combining this with the finitely many indices \(n\ge 2\) for which \(m_n\le 0\), we obtain constants \(C_2,C_3\) such that for all \(n\ge 2\) with $\Lambda^n+\Gamma^{-1}\ne 0$, one has
\[
\nu_p(\Lambda^n+\Gamma^{-1})\le C_2\log n+C_3.\qedhere
\]
\end{proof}

\begin{lemma}\label{lem:linear-lb}
Assume \(\alpha\beta\neq 0\). Fix a prime $p\mid d$ and set $\delta:=\nu_p(d)\ge 1$.
If $\nu_p(S_n)\ge 0$ and $\Lambda^n+\Gamma^{-1}\ne 0$, then
\begin{equation}\label{eq:linear-lb}
\nu_p(\Lambda^n+\Gamma^{-1})\ge \frac{\delta}{2}n-\nu_p(\beta)-\nu_p(\Gamma).
\end{equation}
\end{lemma}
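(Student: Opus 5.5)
The plan is to combine Lemma~\ref{boundthing} with the hypothesis $\nu_p(S_n)\ge 0$ and the value of $\nu_p(r_2)$ from Corollary~\ref{cor:r2-valuation}. Since $\alpha\beta\neq 0$ and $\Lambda^n+\Gamma^{-1}\neq 0$, Lemma~\ref{boundthing} applies and gives the exact identity
\[
\nu_p(S_n)=\nu_p(\beta)+n\nu_p(r_2)+\nu_p(\Gamma)+\nu_p\big(\Lambda^n+\Gamma^{-1}\big).
\]
All quantities here are finite. $\beta$ and $\Gamma=\alpha/\beta$ are nonzero elements of $K_0$, so their valuations under the fixed embedding $\iota_p$ of Definition~\ref{def:vp-extension} are well-defined rational numbers. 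Also $\Lambda^n+\Gamma^{-1}\neq 0$ by assumption.

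Next, I substitute $\nu_p(r_2)=-\delta/2$ from Corollary~\ref{cor:r2-valuation}, which covers both odd $p$ (Lemma~\ref{lem:odd-p-roots}) and $p=2$ (Lemma~\ref{lem:dyadic-roots}). This yields
\[
\nu_p(S_n)=\nu_p(\beta)-\frac{\delta}{2}n+\nu_p(\Gamma)+\nu_p\big(\Lambda^n+\Gamma^{-1}\big).
\]
Imposing $\nu_p(S_n)\ge 0$ and moving the other terms to the right-hand side gives exactly \eqref{eq:linear-lb}.

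The only point needing care is consistency of the valuation. $S_n$ is rational, so its valuation through $\iota_p$ agrees with the usual $p$-adic valuation on $\Q$, and the hypothesis $\nu_p(S_n)\ge 0$ means the same thing in either sense. The extended valuation is additive on products in $L_p$, which is what Lemma~\ref{boundthing} relies on. Beyond this bookkeeping I expect no real obstacle; the lemma is a direct rearrangement of earlier results.
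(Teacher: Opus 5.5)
Your proposal is correct and matches the paper's proof: you rearrange the identity of Lemma~\ref{boundthing}, substitute $\nu_p(r_2)=-\delta/2$ from Corollary~\ref{cor:r2-valuation}, and impose $\nu_p(S_n)\ge 0$. Your added remarks on the finiteness of the valuations and on $\nu_p(S_n)$ agreeing with the usual $p$-adic valuation on $\Q$ are harmless bookkeeping that the paper leaves implicit.
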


\begin{proof}
If one rearranges the result from Lemma~\ref{boundthing}, one gets
\[
\nu_p\big(\Lambda^n+\Gamma^{-1}\big)=\nu_p(S_n)-\nu_p(\beta)-n\nu_p(r_2)-\nu_p(\Gamma).
\]
By Corollary~\ref{cor:r2-valuation} we know that $\nu_p(r_2)=-\delta/2$ for $p\mid d$.
If $\nu_p(S_n)\ge 0$ then substituting $\nu_p(r_2)=-\delta/2$ yields \eqref{eq:linear-lb}.
\end{proof}
\begin{proof}[Proof of Theorem~\ref{thm:main-cutoff}]
Fix a prime $p\mid d$, and set $\delta:=\nu_p(d)\ge 1.$ First suppose $\alpha\beta=0$. If $\beta=0$, then $S_n=\alpha r_1^n$; if
$\alpha=0$, then $S_n=\beta r_2^n$. In both cases, Corollary~\ref{cor:r2-valuation} gives us, $\nu_p(S_n)=\nu_p(\gamma)-\frac{\delta}{2}n,$ where $\gamma=\alpha$ in the first case and $\gamma=\beta$ in the second. All we need to do then is just rearrange the bound for $n$, after doing this we see that $n>\frac{2\nu_p(\gamma)}{\delta}.$ Therefore, in this case, we may take
\[
k_p
:=
\max\left(
1,\left\lfloor \frac{2\nu_p(\gamma)}{\delta}\right\rfloor+1
\right).
\]
Then $\nu_p(S_n)<0$ for every $n\ge k_p$.

Since we dealt with the case where $\alpha\beta=0$, for the rest of the proof, we can assume $\alpha\beta\ne 0$. First we recall that $S_n=\Gamma\beta r_2^n(\Lambda^n+\Gamma^{-1})$. One thing we can quickly deduce is that at most one integer $n\ge 0$ satisfies $\Lambda^n+\Gamma^{-1}=0$, because if two distinct integers $m,n$ satisfied this equation, then $\Lambda^m=\Lambda^n,$ so $\Lambda^{m-n}=1$, which would contradict Lemma~\ref{lem:Lambda-not-rootunity}. If such an integer exists, call it $n_0$; otherwise set $n_0:=0$. In the final cutoff below we will require $n>n_0$, so that $\Lambda^n+\Gamma^{-1}\ne 0$. We can start the case where $\nu_p(\Gamma^{-1})\ne 0.$ By Lemma~\ref{lem:Gamma-split}, $\nu_p(\Lambda^n+\Gamma^{-1})\le 0$ for every $n\ge 1$. If $\nu_p(S_n)\ge 0$, then Lemma~\ref{lem:linear-lb} gives
\[
0\ge \frac{\delta}{2}n-\nu_p(\beta)-\nu_p(\Gamma).
\]
We can just re-write this bound in terms of $n$, which gets us:
\[
n\le \frac{2(\nu_p(\beta)+\nu_p(\Gamma))}{\delta}.
\]
Therefore, if we use the bound:
\[
k_p
:=
\max\left(
1,n_0+1,
\left\lfloor
\frac{2(\nu_p(\beta)+\nu_p(\Gamma))}{\delta}
\right\rfloor+1
\right).
\]
Then $\nu_p(S_n)<0$ for all $n\ge k_p$.

Now, we can treat the case where $\nu_p(\Gamma^{-1})=0.$ By Lemma~\ref{lem:log-upper}, there exist constants $C_2,C_3$ such that, for all $n\ge 2$ with $\Lambda^n+\Gamma^{-1}\ne 0$,
\[
\nu_p(\Lambda^n+\Gamma^{-1})\le C_2\log n+C_3.
\]
If we want $\nu_p(S_n)\ge 0$, then Lemma~\ref{lem:linear-lb} implies that the inequality must also hold:
\[
\frac{\delta}{2}n-\nu_p(\beta)-\nu_p(\Gamma)
\le
\nu_p(\Lambda^n+\Gamma^{-1}).
\]
Combining the two inequalities gives
\[
\frac{\delta}{2}n
\le
C_2\log n+\nu_p(\beta)+\nu_p(\Gamma)+C_3.
\]
Now, all we must do is find a way to rearrange and solve for n. We can start by setting $L:=\nu_p(\beta)+\nu_p(\Gamma)+C_3$ and $L_+:=\max(L,0).$ Then
\[
\frac{\delta}{2}n\le C_2\log n+L_+.
\]
For $n\ge 16$, we have $\log n\le \sqrt n$, so for $n\ge16$ the inequality holds:
\[
\frac{\delta}{2}n\le C_2\sqrt n+L_+.
\]
This becomes $\frac{\delta}{2}(\sqrt n)^2-C_2(\sqrt n)-L_+\le 0.$ So we can use the quadratic formula to solve the inequality for $\sqrt n$:
\[
\sqrt n\le
\frac{C_2+\sqrt{C_2^2+2\delta L_+}}{\delta}.
\]
Now we can just square the solution to solve for the upper bound on $n$

\begin{equation}
\label{eq:upper_bound_final}
n\leq
\left(
\frac{C_2+\sqrt{C_2^2+2\delta L_+}}{\delta}
\right)^2.
\end{equation}

Write $R_p:=\left(\dfrac{C_2+\sqrt{C_2^2+2\delta L_+}}{\delta}\right)^2$ for the right-hand side of \eqref{eq:upper_bound_final}. Then no $n\ge 16$ that does not satisfy \eqref{eq:upper_bound_final} can have $\nu_p(S_n)\ge 0$. Thus, in this case, we may take
\[
k_p
:=
\max\left(
16,n_0+1,\lfloor R_p\rfloor+1
\right).
\]
Then $\nu_p(S_n)<0$ for every $n\ge k_p$.

To summarize, for each prime $p\mid d$, the preceding cases construct a cutoff $k_p$ such that $\nu_p(S_n)<0$ for all $n\ge k_p$. Let
\[
k:=\max_{p\mid d} k_p.
\]
Then, for every $n\ge k$, at least one prime $p\mid d$ satisfies $\nu_p(S_n)<0.$ Therefore $S_n\notin\mathbb Z$ for all $n\ge k$. This proves the theorem.
\end{proof}

\section*{Sequences from the OEIS}

The OEIS sequences discussed explicitly in this paper are the sequence of powers of $3$, \seqnum{A000244}, which appears in Remark~\ref{rem:normalizations}, and the sequence \seqnum{A133594}, which appears in Example~\ref{ex:oeis-a133594} as an auxiliary recurrence associated to a fractional-coefficient recurrence.

\section*{Disclosure of AI use}
While the two theorems studied in this paper, the overall strategy, and the key ideas are the author's own, the author used the large language models ChatGPT and Claude to check the manuscript for mathematical and computational errors, to verify the numerical examples and OEIS data, to improve the English exposition, and to format the paper to the journal's style guide. These tools were also used to help make several of the author's proof ideas rigorous and to explain background material with which the author was less familiar. Specifically, they aided in the matrix argument in the proof of Lemma~\ref{lem:special-coprime-clean}, and with a number of the field-theoretic, $p$-adic, group-theoretic, and polynomial-root steps in Section~\ref{sec:cutoff}. The author independently checked every AI-assisted argument, wrote the text of the paper, and takes full responsibility for the correctness and originality of all results and proofs.

\section*{Acknowledgments}
The author expresses his deepest gratitude to his mentor Dr.\ Rowland, who provided the question that led the author into this area of research and whose careful guidance during the paper-writing process made this work possible. The author also thanks Hamza Virk for many useful conversations throughout the research.

\end{document}